\documentclass[12pt,a4paper]{article}
\usepackage{amsmath,amssymb,amsfonts}
\usepackage{amsthm}
\usepackage{mathtools}
\usepackage{authblk}
\usepackage{enumerate}
\usepackage[english]{babel}
\usepackage{enumitem}
\usepackage{todonotes}
\usepackage{indentfirst}
\usepackage{geometry}
\usepackage{graphicx}
\usepackage{tabularx}
\usepackage{tocbibind}
\usepackage{tikz}
\usepackage{array}
\usepackage{caption}
\usepackage{hyperref}
\usepackage{lipsum}
\usepackage{listings}
\usepackage{multicol,multirow}
\usepackage{xcolor,xspace}
\usepackage[english]{babel}
\usepackage[export]{adjustbox}
\newtheorem{theorem}{Theorem}[section]
\newtheorem{corollary}{Corollary}[theorem]
\newtheorem{lemma}[theorem]{Lemma}
\newtheorem{prop}[theorem]{Proposition}
\newtheorem{example}[theorem]{Example}
\theoremstyle{definition}

\theoremstyle{remark}

\geometry{a4paper,total={170mm,246mm},left=20mm,top=23mm}

\title{\textbf{On Spectrum of Neighbourhood Corona Product of Signed Graphs}}
\author[1]{Bishal Sonar\thanks{Email: bsonarnits@gmail.com}}
\author[2]{Satyam Guragain\thanks{Email: shatym17@gmail.com}}
\author[3]{Ravi Srivastava\thanks{Corresponding author Email: ravi@nitsikkim.ac.in}}
\affil[1,2,3]{Department of Mathematics, National Institute of Technology Sikkim, South Sikkim 737139, India}
\date{}
\setlength {\marginparwidth }{2cm}

\begin{document}
\parskip1ex
\parindent0pt
\maketitle

\begin{abstract}
    Given two signed graphs $\Gamma_1$ with nodes $\{u_1,u_2,\cdots,u_n\}$ and $\Gamma_2$, the neighbourhood corona, $\Gamma_1*\Gamma_2$ is the signed graph obtained by taking one copy of $\Gamma_1$ and $n_1$ copies of $\Gamma_2$, and joining every neighbour of the $i^{th}$ node with each nodes of the $i^{th}$ copy of $\Gamma_2$ by a new signed edge. In this paper we will determine the condition for $\Gamma_1*\Gamma_2$ to be balanced. We also determine the adjacency spectrum of $\Gamma_1*\Gamma_2$ for arbitrary $\Gamma_1$ and $\Gamma_2$, and Laplacian and signless Laplacian spectrum of $\Gamma_1*\Gamma_2$ for regular $\Gamma_1$ and arbitrary $\Gamma_2$, in terms of the corresponding spectrum of $\Gamma_1$ and $\Gamma_2$.
\end{abstract}
\textbf{MSC2020 Classification:} 05C22, 05C50, 05C76\\
\textbf{Keywords:} Signed graph; neighbourhood corona; balancedness; Adjacency, Laplacian, and signless Laplacian matrix.

\section{Introduction}

    \noindent The paper examines graphs that are finite, undirected, and simple. Let $H=(V(H),E(H))$ be a graph with vertex set $V(H) = \{u_1,u_2,...,u_n\}$ and edge set $E(G)=\{e_1,e_2,..,e_m\}$. A signed graph $\Gamma(H,\sigma)$ is a tuple consisting of an unsigned graph $H=(V,E)$ and a mapping $\sigma:E(H)\rightarrow\{+,-\}$ known as the signature of $\Gamma$ which assign either positive sign or negative sign to edges. A cycle(C) is positive if $\prod_{e\in C} \sigma(e)=+$, other wise it is negative. A graph is balanced if all its cycles are positive~\cite{harary1953notion}. The signed degree of a vertex $v$ (denoted by $sdeg(u)$) equals $d^+_u-d^-_u$ where $d^+_u$ and $d^-_u$ denotes the positive and negative degrees of a vertex $u$ respectively and the degree of a vertex $u$ is given by $d_v=d^+_u+d^-_u$. The adjacency matrix of $\Gamma=(H,\sigma)$ is the $n\times n$ matrix given by $A(\Gamma)=(a^\sigma _{ij} )$ where $a^\sigma_{ij}=\sigma(u_iu_j)a_{ij}$ and $a_{ij}$ = 1 if $v_i$ and $u_j$ are adjacent and $a_{ij}=0$ otherwise. The vertex-edge incidence matrix of underlying graph $H$ of $\Gamma$ is the $n\times m$ matrix given by $R(H)=(b_{ij})$ where 
    \begin{equation*}
        b_{ij}=\begin{cases}
        1 & \text{if } u_i \text{ is an endpoint of edge } e_j\\ 
        0 & \text{otherwise}
    \end{cases}
    \end{equation*} 
    
    \noindent The signed Laplacian matrix of $\Gamma$ is given by $L(\Gamma)=D(\Gamma)-A(\Gamma)$ whereas the signless Laplacian matrix of $\Gamma$ is given by $Q(\Gamma)=D(\Gamma)+A(\Gamma)$, where $D(\Gamma)=diag(d(u_1),d(u_2),\cdots,d(u_n))$. It is clear that $R(H)I_mR(H)^T=Q(H)$ where $I_m$ is an identity matrix of order $m$.

    Given any $n\times n$ matrix $M$, denote characteristics polynomial of $M$ by, $f(M,x)=det(xI-M)$, where $I$ is the identity matrix of order same as that of $M$. We denote the eigenvalues of $A(\Gamma)$, $L(\Gamma)$ and $Q(\Gamma)$, respectively, by \[\lambda_1(\Gamma)\leq \lambda_1(\Gamma)\leq...\leq\lambda_n(\Gamma),\]
    \[\gamma_1(\Gamma)\leq\gamma_2(\Gamma)\leq...\leq\gamma_n(\Gamma),\]
    \[\nu_1(\Gamma)\leq\nu_2(\Gamma)\leq...\leq\nu_n(\Gamma).\]
    The collection of eigenvalues of $A(\Gamma), L(\Gamma), and Q(\Gamma)$ is denoted by $S(A(\Gamma)), S(L(\Gamma))$, and $S(Q(\Gamma))$ respectively. Two graphs are called $M$-co-spectral if they have the same $M$-spectrum, $M\in\{A(\Gamma),L(\Gamma),Q(\Gamma)\}.$ 
    
    A signed graph $\Gamma=(H,\sigma)$ is net-regular with net degree $k$ ($k$ being an integer) if the signed degree of all the vertices in $\Gamma$ equals $k$~\cite{nayak2016net} whereas $\Gamma$ is said to be co-regular, if the underlying graph $H$ is regular for some integer $r$ and $\Gamma$ is net-regular with net degree $k$. The pair $(r,k)$ is the co-regularity pair of $\Gamma=(H,\sigma)$~\cite{shahul2015co}.
    A marking is a function $\mu:V(G)\rightarrow\{+,-\}$ which assigns either a positive sign or negative sign to the vertices of a signed graph $\Gamma=(H,\sigma)$. Thus a signed graph is a 3-tuple $\Gamma=(H,\sigma,\mu)$. In this paper, we will mostly discuss canonical marking (denoted as $\mu$) although multiple marking function for a signed graph exists. In canonical marking $\mu(u)=\prod_{e\in E_u} \sigma(e)$ where $E_u$ is the set of signed edges adjacent to $u$. For further information about marking function refer to~\cite{adhikari2023corona} and references therein.\\
   
    Indulal Gopalapillai~\cite{gopalapillai2011spectrum} introduced the spectrum of neighbourhood corona of unsigned graphs and later in 2013 Liu and Zhou~\cite{liu2014spectra} extended the work using the concept of coronal of unsigned graphs. We are extending this work for signed graphs. The structural properties of the corona of two signed graphs were first introduced by Adhikari et. al. in ~\cite{adhikari2023corona}. We will use those concepts to study the structural properties of \textit{neighbourhood corona product} of two signed graphs.

    \begin{lemma}\label{lemma 0}
        (Schur complement)~\cite{bapat2010graphs} Let $A$ be a $n\times n$ matrix partitioned as \[\begin{bmatrix}
            A_{11}&A_{12}\\
            A_{21}&A_{22},
        \end{bmatrix}\]
        where $A_{11}$ and $A_{22}$ are square matrices. If $A_{11}$ and $A_{22}$ are invertible , then
        \begin{equation*}
        \begin{split}
            \det\begin{bmatrix}
                A_{11}&A_{12}\\
                A_{21}&A_{22},
            \end{bmatrix}
            &=\det(A_{22})\det(A_{11}-A_{12}A_{22}^{-1}A_{21})\\
            &=\det(A_{11})\det(A_{22}-A_{21}A_{11}^{-1}A_{12})
        \end{split}
        \end{equation*}
    \end{lemma}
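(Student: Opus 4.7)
The plan is to exhibit two explicit block-triangular factorisations of the full matrix whose outer factors have unit determinant, so that the determinant of the whole reduces, by multiplicativity, to the determinant of a block-diagonal middle factor. This is the standard ``completion of block squares'' argument, and it splits cleanly into one factorisation per identity.

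For the first identity I would verify, by direct block multiplication, the factorisation
\[
\begin{bmatrix} A_{11} & A_{12} \\ A_{21} & A_{22} \end{bmatrix}
=
\begin{bmatrix} I & A_{12}A_{22}^{-1} \\ 0 & I \end{bmatrix}
\begin{bmatrix} A_{11}-A_{12}A_{22}^{-1}A_{21} & 0 \\ 0 & A_{22} \end{bmatrix}
\begin{bmatrix} I & 0 \\ A_{22}^{-1}A_{21} & I \end{bmatrix},
\]
which makes sense precisely because $A_{22}$ is invertible. The outer two factors are block triangular with identity diagonal blocks, hence each has determinant $1$. The middle factor is block diagonal, so its determinant equals $\det(A_{22})\cdot\det\!\bigl(A_{11}-A_{12}A_{22}^{-1}A_{21}\bigr)$. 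Taking determinants across the factorisation yields the first formula.

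For the second identity I would use the mirror factorisation, this time pivoting on $A_{11}$:
\[
\begin{bmatrix} A_{11} & A_{12} \\ A_{21} & A_{22} \end{bmatrix}
=
\begin{bmatrix} I & 0 \\ A_{21}A_{11}^{-1} & I \end{bmatrix}
\begin{bmatrix} A_{11} & 0 \\ 0 & A_{22}-A_{21}A_{11}^{-1}A_{12} \end{bmatrix}
\begin{bmatrix} I & A_{11}^{-1}A_{12} \\ 0 & I \end{bmatrix},
\]
and conclude by the same determinant argument, invoking invertibility of $A_{11}$ to make the outer factors well defined.

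There is no real conceptual obstacle here; the proof is essentially bookkeeping. The only things requiring care are the order of the non-commuting blocks when carrying out the block multiplication (since $A_{12}, A_{21}$ need not be square) and matching each identity to exactly the invertibility hypothesis it uses --- $A_{22}^{-1}$ for the first formula and $A_{11}^{-1}$ for the second.
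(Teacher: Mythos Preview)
Your argument is correct and is the standard block-LDU proof of the Schur complement determinant formula; the factorisations and the determinant reasoning are exactly right, with each identity using only the invertibility hypothesis it needs. Note, however, that the paper does not prove this lemma at all: it is simply stated with a citation to \cite{bapat2010graphs} and used as a black box, so there is no ``paper's own proof'' to compare against --- your proposal supplies precisely the argument the cited reference gives.
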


    \noindent \textbf{Notation Used:}\\
    $a^{(n)}$: $a$ is repeated $n$ times. 
    
\section{Neighbourhood Corona product of signed graph}

    Let $\Gamma_1=(H_1,\sigma_1,\mu_1)$ and $\Gamma_2=(H_2,\sigma_2,\mu_2)$ be two signed graphs on $n_1$ nodes, $m_1$ edges and $n_2$ nodes and $m_2$ edges respectively. The neighbourhood corona product $\Gamma_1*\Gamma_2$ of $\Gamma_1$ and $\Gamma_2$ is a signed graph obtained by taking one copy of $\Gamma_1$ and $n_1$ copies of $\Gamma_2$, and joining every neighbour of the $i^{th}$ nodes of $\Gamma_1$ with each node of the $i^{th}$ copy of $\Gamma_2$ by a new signed edge, where the sign of the new edge between nodes $u$ which is the neighbour of $i^{th}$ nodes of $\Gamma_1$ and $j^{th}$ nodes $v$ in the $i^{th}$ copy of $\Gamma_2$ is given by $\mu_1(u)\mu_2(v)$, where $\mu_i$ is a marking defined by $\sigma_1$, and $\sigma_2$.

    If $V(H_1)=\{u_1,u_2,...,u_{n_1}\}$ and $V(H_2)=\{v_1,v_2,...,v_{n_2}\}$ then the marking vector of $\Gamma_1$ and $\Gamma_2$ are column vectors of dimension $n_1\times1$ and $n_2\times1$ given by $\mu(\Gamma_1)=[\mu_1(u_1),\mu_1(u_2),...,\mu_1(u_{n_1)}]^T$ and $\mu(\Gamma_2)=[\mu_2(v_1),\mu_2(v_2),...,\mu_2(v_{n_2)}]^T$ respectively, where $A^T$ denotes transpose of $A$ and for $i=1,2.$ \[\mu_i(w)=\begin{cases}
        +1 & \text{if marking of $w$ is +}\\
        -1 & \text{if marking of $w$ is $-$}
    \end{cases}\]
   Also we consider $\phi(\Gamma_j)=diag[\mu(\Gamma_j)]^T$ so that $\phi(\Gamma_j)^2=I_{n_j}$ for $i=1,2.$ Note that the neighbourhood corona product of $\Gamma_1*\Gamma_2$ has $n_1(n_2+1)$ nodes and $m_1+n_1m_2+2m_1n_2$ edges.

\subsection{Structural property of Neighbourhood corona}
    \noindent Structural property of corona of signed graphs has already 
being established by Adhikari at. el.~\cite{adhikari2023corona}, following that we present statistical information concerning the counts of nodes, edges, and traits in $\Gamma=\Gamma_1*\Gamma_2=(G,\sigma,\mu)$ for signed graphs $\Gamma_1=(G_1,\sigma_1,\mu_1)$ and $\Gamma_2=(G_2,\sigma_2,\mu_2)$, where $G=(V,E)$, $G_1=(V_1,E_1)$ and $G_2=(V_2,E_2)$. Clearly, the number of nodes in $\Gamma_1*\Gamma_2$ is $|V|=|V_1|(1+|V_2|)$. We define for $i=1,2$,
    \begin{align*}
		N_i^+	&= \text{count of nodes marked positive in $\Gamma_i$}\\
		N_i^-	&= \text{count of nodes marked negative in $\Gamma_i$}\\
		B^+_i	&= \text{count of nodes marked positive times its degree in $\Gamma_i$}\\
		B^-_i	&= \text{count of nodes marked negative times its degree in $\Gamma_i$}
	\end{align*}
 
    Then the table below describes the statistics of edges in $\Gamma_1*\Gamma_2$. \\
    \begin{table}[ht]
        \centering
        \begin{tabular}{|c|c|c|c|}
        \hline
        Edges & $\Gamma_1$  & $\Gamma_2$  & $\Gamma_1*\Gamma_2$ \\ \hline\hline
        Count of edges & $|E_1|$ & $|E_2|$ & $|E_1|+|V_1||E_2|+2|V_2||E_1|$ \\ \hline
        Count of positive edges & $|E_1^+|$ & $|E_2^+|$ &$|E_1^+|+|V_1||E_2^+|+B_1^+N_2^+ +B^-_1N_2^-$\\ \hline
        Count of negative edges & $|E_1^-|$ & $|E_2^-|$ &$|E_1^-|+|V_1||E_2^-|+B_1^+N_2^- +B^-_1N_2^+$\\ \hline
        \end{tabular}
        \caption{Count of edges in $\Gamma_1*\Gamma_2$}
        \label{T1}
    \end{table}
    \noindent Next, we will count the number of triads in $\Gamma_1*\Gamma_2$. Let $p=+,-$ and denote for $i=1,2$,\\
    $|E^p_i|^{\overset{+}{+}}=$ count of edges with sign $p$ which connects a pair of positive nodes in $\Gamma_i$\\
    $|E^p_i|^{\overset{+}{-}}=$ count of edges of sign $p$ which connects a pair opposite sign nodes\\
    $|E^p_i|^{\overset{-}{-}}=$ count of edges with sign $p$ which connects a pair of negatively marked nodes in $\Gamma_i$\\
    $|E^p_i|^{\overset{p}{p}}_c=$ count of edges with sign $p$ which connects a pair of nodes of sign $p$ and $p$ times its common neighbour nodes in $\Gamma_i$\\
    \begin{table}[ht]
        \centering
        \begin{tabular}{|c|c|c|c|} \hline 
            Triads &$\Gamma_1$ &$\Gamma_2$ &$\Gamma_1*\Gamma_2$\\ \hline\hline
            Count of $T_0$ &$|T_0(\Gamma_1)|$ &$|T_0(\Gamma_2)|$ &$|T_0(\Gamma_1)|+|V_1||T_0(\Gamma_2)|+B_1^+|E_2^+|^{\overset{+}{+}}+B_1^-|E_2^+|^{\overset{-}{-}}$  \\ &&&$+N_2^+|E_1^+|_c^{\overset{+}{+}}+N_2^-|E_1^+|_c^{\overset{-}{-}}$ \\ \hline
            Count of $T_1$ &$T_1(\Gamma_1)$ &$T_1(\Gamma_2)$ &$|T_1(\Gamma_1)|+|V_1||T_1(\Gamma_2)|+B_1^+(|E_2^+|^{\overset{+}{-}}+|E_2^-|^{\overset{+}{+}})+B_1^-(|E_2^+|^{\overset{+}{-}}$\\ &&&$+|E_2^-|^{\overset{-}{-}})+N_2^+(|E_1^+|_c^{\overset{+}{-}}+|E_1^-|_c^{\overset{+}{+}})+N_2^-(|E_1^+|_c^{\overset{+}{-}}+|E_1^-|_c^{\overset{-}{-}})$\\ \hline
            Count of $T_2$ &$|T_2(\Gamma_1)|$ &$|T_2(\Gamma_2)|$ &$|T_2(\Gamma_1)|+|V_1||T_2(\Gamma_2)|+B_1^+(|E_2^+|^{\overset{-}{-}}+|E_2^-|^{\overset{+}{-}})+B_1^-(|E_2^+|^{\overset{+}{+}}$\\ &&&$+|E_2^-|^{\overset{+}{-}})+N_2^+(|E_1^+|_c^{\overset{-}{-}}+|E_1^-|_c^{\overset{+}{-}})+N_2^-(|E_1^+|_c^{\overset{+}{+}}+|E_1^-|_c^{\overset{+}{-}})$\\ \hline
            Count of $T_3$ &$|T_3(\Gamma_1)|$ &$|T_3(\Gamma_2)|$  & $|T_3(\Gamma_1)|+|V_1||T_3(\Gamma_2)|+B_1^+|E_2^-|^{\overset{-}{-}}+B_1^-|E_2^-|^{\overset{+}{+}}+N_2^+|E_1^-|_c^{\overset{-}{-}}$\\ 
            &&&$+N_2^-|E_1^-|_c^{\overset{+}{+}}$ \\ \hline
            \end{tabular}
        \caption{Count of Triads in $\Gamma_1*\Gamma_2$}
        \label{T2}
    \end{table}
    $T_i=$ triad with $i$ negative edges\\

    \noindent The proofs of the formulas presented in Table \ref{T1} and Table \ref{T2} directly follow from the definition of neighbourhood corona product, and easily can be verified. Total triads of $\Gamma_1*\Gamma_2$ is given by \[T(\Gamma)=T(\Gamma_1)+|V_1|T(\Gamma_2)+|V_1|_c|E_2|+|V_2||E_1|_c\]
    where $T(\Gamma_i)$, i=1,2, denotes the total triads in $\Gamma_i$.\\
    It is very  much clear that $\Gamma_1*\Gamma_2$ is unbalanced if one of $\Gamma_1$ or $\Gamma_2$ is unbalanced. However, if both $\Gamma_1$ and $\Gamma_2$ are balanced then it is not certain that $\Gamma_1*\Gamma_2$ is balanced. For example, in Figure \ref{f1} the graph $\Gamma_1*\Gamma_2$ is unbalanced even if both $\Gamma_1$ and $\Gamma_2$ are balanced.\\
    \begin{figure}[ht]
        \centering
        \includegraphics[scale=0.6]{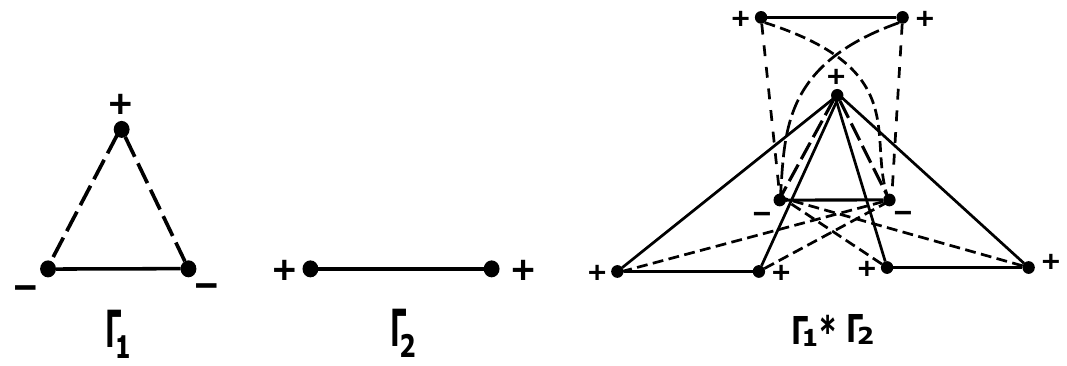}
        \caption{Neighbourhood Corona product of $\Gamma_1$ and $\Gamma_2$. The dotted line implies a negative edge and the normal line implies a positive edge. }
        \label{f1}
    \end{figure}
    
    \noindent The upcoming theorem will establish the criteria under which the signed graph $\Gamma_1*\Gamma_2$ is considered unbalanced.
    \begin{theorem}
        Let $\Gamma_1=(G_1,\sigma_1,\mu_1)$ and $\Gamma_2=(G_2,\sigma_2,\mu_2)$ be two balanced signed graphs. Then $\Gamma_1*\Gamma_2$ is unbalanced iff either $\Gamma_1$ or $\Gamma_2$ contains the following types of edges.
        \begin{enumerate}[label=(\roman*)]
            \item positive edge connecting opposite nodes
            \item negative edge connecting a pair of positive nodes
            \item negative edge connecting a pair of negative nodes.
        \end{enumerate}
    \end{theorem}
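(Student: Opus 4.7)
My plan rests on the standard characterisation that a signed graph is balanced iff there is a marking $\eta$ with $\sigma(ab)=\eta(a)\eta(b)$ for every edge $ab$. The key observation is that the forbidden edges in (i)--(iii) are exactly the edges on which the \emph{canonical} marking fails this identity, so the hypothesis ``$\Gamma_i$ contains no such edge'' is equivalent to $\sigma_i(ab)=\mu_i(a)\mu_i(b)$ for every edge of $\Gamma_i$.

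For the \emph{only-if} direction I would argue the contrapositive. Assuming neither $\Gamma_1$ nor $\Gamma_2$ has a forbidden edge, I would glue the two canonical markings into a marking $\eta$ on $V(\Gamma_1*\Gamma_2)$ by
\[
\eta(u_i)=\mu_1(u_i),\qquad \eta(v_k^{(i)})=\mu_2(v_k),
\]
and verify $\sigma=\eta\eta$ in three cases: edges inside $\Gamma_1$ and edges inside each copy of $\Gamma_2$ follow from the hypotheses on $\Gamma_1,\Gamma_2$, while for a new corona edge $u_jv_k^{(i)}$ the definition gives sign $\mu_1(u_j)\mu_2(v_k)=\eta(u_j)\eta(v_k^{(i)})$. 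Since the identity holds globally, signs telescope around every cycle and $\Gamma_1*\Gamma_2$ is balanced.

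For the \emph{if} direction I would exhibit short negative cycles. If $\Gamma_2$ contains a forbidden edge $v_kv_l$, take any $u_j$ adjacent to some $u_i$ in $\Gamma_1$; then $u_j$ is joined to both $v_k^{(i)}$ and $v_l^{(i)}$ in the $i$th copy, and the triangle $u_j,v_k^{(i)},v_l^{(i)}$ has edge-sign product
\[
\sigma_2(v_kv_l)\cdot\mu_1(u_j)\mu_2(v_k)\cdot\mu_1(u_j)\mu_2(v_l)=\sigma_2(v_kv_l)\mu_2(v_k)\mu_2(v_l)=-1
\]
by definition of a forbidden edge. Symmetrically, if $\Gamma_1$ contains a forbidden edge $u_ju_k$ lying in a triangle $u_iu_ju_k$, then for any $v^{(i)}$ in the $i$th copy the triangle $u_j,u_k,v^{(i)}$ has sign $\sigma_1(u_ju_k)\mu_1(u_j)\mu_1(u_k)=-1$.

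The step I expect to be the main obstacle is the residual subcase of the necessity argument in which a forbidden edge $u_ju_k$ of $\Gamma_1$ has no common neighbour in $\Gamma_1$: then no single copy of $\Gamma_2$ closes a triangle through $u_ju_k$, and one must instead splice a $\Gamma_1$-walk containing $u_ju_k$ with two corona edges through a carefully chosen copy of $\Gamma_2$. The telescoping must still collapse to the factor $\sigma_1(u_ju_k)\mu_1(u_j)\mu_1(u_k)=-1$, but selecting a copy whose index is adjacent to the right endpoints, so that the cycle actually closes, is the delicate technical point that this direction hinges on.
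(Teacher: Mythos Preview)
Your contrapositive argument for the ``only if'' direction (no forbidden edge $\Rightarrow$ $\Gamma_1*\Gamma_2$ balanced) via the marking $\eta(u_i)=\mu_1(u_i)$, $\eta(v_k^{(i)})=\mu_2(v_k)$ is correct and in fact goes beyond the paper, whose proof does not address that direction at all.

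For the ``if'' direction you follow exactly the paper's idea of exhibiting a negative triad, and you are right to isolate the residual subcase of a forbidden edge $u_ju_k\in E(\Gamma_1)$ whose endpoints have no common neighbour in $\Gamma_1$. But this is not a delicate technicality that a longer spliced cycle will repair: the implication actually fails there. Take $\Gamma_1$ to be the path $u_1u_2u_3$ with $\sigma_1(u_1u_2)=+$ and $\sigma_1(u_2u_3)=-$, and let $\Gamma_2=K_1$ be the single vertex $v$. Then $\mu_1=(+,-,-)$ and $\mu_2(v)=+$, so both edges of $\Gamma_1$ are of the forbidden types ((i) and (iii) respectively), while $\Gamma_2$ has no edges at all. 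Yet the unique cycle of $\Gamma_1*\Gamma_2$ is $u_1\,u_2\,u_3\,v^{(2)}\,u_1$ with sign product $(+)(-)(-)(+)=+$, so $\Gamma_1*\Gamma_2$ is balanced; indeed $\eta=(+,+,-,-,+,-)$ on $u_1,u_2,u_3,v^{(1)},v^{(2)},v^{(3)}$ is a balancing marking. The paper's triad argument (which tacitly assumes the forbidden edge sits inside a triangle of the corona), and any splicing along the lines you sketch, therefore cannot close this direction without an additional hypothesis---for instance that every edge of $\Gamma_1$ lies in a triangle of $\Gamma_1$. Your instinct that something essential is missing in this subcase is correct; what is missing is a hypothesis, not a proof step.
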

    \begin{proof}
        The proof is the consequence of the observation that any positive or negative marked nodes of $\Gamma_i$, $i=1,2,$ form a triad $T_1$ in $\Gamma_1*\Gamma_2$ when there are edges of type $(1)$ and/or $(ii)$ in $\Gamma_i$, $i=1,2,$. Otherwise, it results in the formation of a $T_3$ triad when $\Gamma_1$ or $\Gamma_2$ includes an edge of type $(iii).$
    \end{proof}
    \noindent In the next section we will study about the spectral properties of neighbourhood corona product.

\subsection{Spectra of neighbourhood corona}

    \noindent Cam McLeman, and Erin McNicholas~\cite{mcleman2011spectra} first introduced the coronal of graphs. Then Shu and Gui~\cite{cui2012spectrum} generalised it and defined coronal for Laplacian and signless Laplacian matrix of unsigned graphs. Recently Amrik at. el.~\cite{singh2023structural} defined coronal of signed graph. Given a signed graph $\Gamma=(G,\sigma,\mu)$ of order $n$ and $M$ be a graph matrix of $\Gamma$. Now, viewed as a matrix over the rational function field $\mathbb{C}(\lambda)$, the characteristic matrix $\lambda I_n-M$ has determinant non-zero, so it is invertible. The signed $M$-coronal $\chi_M(\lambda) \in \mathbb{C}(\lambda)$ of $\Gamma$ is defined as, 
    \begin{equation}
    \label{eqn1}
        \chi_M(\lambda)=\mu(\Gamma)^T(\lambda I_n-M)^{-1}\mu(\Gamma)
    \end{equation}
    If we replace $M$ in (\ref{eqn1}) by $A(\Gamma)$, $L(\Gamma)$, or $Q(\Gamma)$ then we will obtain signed $A(\Gamma)$-coronal, signed $L(\Gamma)$-coronal, or signed $Q(\Gamma)$-coronal of $\Gamma$ respectively.\\
    
    \noindent \textbf{Properties of \textit{Kronecker product}:}~\\
    Let $A=(a_{ij})_{n_1\times m_1}$ and $B=(b_{ij})_{n_2\times m_2}$, then the Kronecker product $A\otimes B$ of matrix $A$ and $B$ is a $n_1n_2\times m_1m_2$ matrix formed by replacing each $a_{ij}$ by $a_{ij}B$.
    \begin{enumerate}[label=(\roman*)]
        \item Kronecker product is associative.
        \item $(A\otimes B)^T=A^T\otimes B^T$.
        \item $(A\otimes B)(C \otimes D)=AC\otimes BD$, given the product $AC$ and $BD$ exists.
        \item $(A\otimes B)^{-1}=A^{-1}\otimes B^{-1}$, for non-singular matrices $A$ and $B$.
        \item If $A$ and $B$ are $n\times n$ and $m\times m$ matrices, then det$(A\times B)=(det A)^m(det B)^n$.
    \end{enumerate}

    Let $\Gamma_1=(G_1,\sigma_1,\mu_1)$ and $\Gamma_2=(G_2,\sigma_2,\mu_2)$ be arbitrary signed graphs on $n_1$ and $n_2$ vertices, respectively. Following~\cite{gopalapillai2011spectrum}, we first label the nodes of $\Gamma_1*\Gamma_2$ as follows. Let $V(\Gamma_1)=\{u_1,u_2,...,u_{n_1}\}$ and $V(\Gamma_2)=\{v_1,v_2,...,v_{n_2}\}$. For $i=1,2,...,n_1,$ let $v_1 ^i,v_2 ^i,...,v_{n_2} ^i$ denotes the nodes of the $i^{th}$ copy of $\Gamma_2$, with the understanding that $v_j^i$ is the copy of $v_j$ for each $j$. Denote
    \begin{equation}
    \label{eqn2}
                W_j=\big\{v_j^1,v_j^2,...,v_j^{n_1}\big\}, ~~~j=1,2,...,n_2
    \end{equation}
    Then $V(\Gamma_1)\cup W_1\cup W_2\cup...\cup W_{n_2}$ is a partition of $V(\Gamma_1*\Gamma_2)$. It is clear that the degrees of the nodes of $\Gamma_1*\Gamma_2$ are: 
    \begin{equation*}
        d_{\Gamma_1*\Gamma_2}(u_i)=(n_2+1)d_{\Gamma_1}(u_i), ~~~i=1,2,...,n_1\\
    \end{equation*}
    \begin{equation*}
        d_{\Gamma_1*\Gamma_2}(v^i_j)=d_{\Gamma_2}(v_j)+d_{\Gamma_1}(u_i), ~~~i=1,2,...,n_1, ~~~j=1,2,...,n_2.
    \end{equation*}

\subsubsection{A-spectra of signed graph}

     \begin{lemma}\label{Lemma1}
        Let $\Gamma=(G,\sigma,\mu)$ be $k$-net-regular signed graph on $n$ nodes, then signed $A(\Gamma)-$coronal of $\Gamma$ is, \[\chi_{A(\Gamma)}(\lambda)=\frac{n}{\lambda-k}\]
    \end{lemma}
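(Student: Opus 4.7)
The plan is to exhibit $\mu(\Gamma)$ as an eigenvector of $A(\Gamma)$ with eigenvalue $k$, then substitute into the defining formula \eqref{eqn1} for the signed coronal. The net-regularity hypothesis immediately gives the row sums of the signed adjacency matrix: for every $i$,
\[
\sum_{j}a^{\sigma}_{ij}=\sum_{j\sim i}\sigma(u_iu_j)=d^{+}(u_i)-d^{-}(u_i)=k,
\]
so $A(\Gamma)\mathbf{1}_n=k\mathbf{1}_n$. To upgrade this to $A(\Gamma)\mu(\Gamma)=k\mu(\Gamma)$ I would use the product formula $\mu(u)=\prod_{e\in E_u}\sigma(e)$ defining the canonical marking together with net-regularity, arguing that in the net-regular setting the marking vector and $\mathbf{1}_n$ play interchangeable roles against $A(\Gamma)$ (up to a consistent global sign that cancels in the quadratic form).

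Once the eigenrelation $A(\Gamma)\mu(\Gamma)=k\mu(\Gamma)$ is in hand, we get $(\lambda I_n-A(\Gamma))\mu(\Gamma)=(\lambda-k)\mu(\Gamma)$, and hence, as an identity over the rational function field $\mathbb{C}(\lambda)$,
\[
(\lambda I_n-A(\Gamma))^{-1}\mu(\Gamma)=\frac{1}{\lambda-k}\mu(\Gamma).
\]
Substituting into the definition of the signed $A(\Gamma)$-coronal,
\[
\chi_{A(\Gamma)}(\lambda)=\mu(\Gamma)^{T}(\lambda I_n-A(\Gamma))^{-1}\mu(\Gamma)=\frac{\mu(\Gamma)^{T}\mu(\Gamma)}{\lambda-k}=\frac{n}{\lambda-k},
\]
where the last equality uses that every entry of $\mu(\Gamma)$ is $\pm 1$ so $\mu(\Gamma)^{T}\mu(\Gamma)=n$.

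The main obstacle is the eigenrelation step. The row-sum computation gives $A(\Gamma)\mathbf{1}_n=k\mathbf{1}_n$ essentially for free, but transferring this from $\mathbf{1}_n$ to $\mu(\Gamma)$ requires careful bookkeeping between the edge signature $\sigma$ and the vertex-marking product that defines $\mu$; this is where the net-regularity hypothesis has to be used in a more substantive way than just for row sums. Once that reduction is complete, the remainder of the argument is purely algebraic, mirroring the unsigned $k$-regular case of McLeman–McNicholas.
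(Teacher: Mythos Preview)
Your outline matches the paper's proof: establish the eigenrelation $A(\Gamma)\mu(\Gamma)=k\,\mu(\Gamma)$ and then substitute into the coronal formula. The paper handles what you call ``the main obstacle'' in one stroke by asserting that $\mu(\Gamma)$ is the constant vector $\pm\mathbf{1}_n$, so that the eigenrelation is inherited directly from the row-sum identity $A(\Gamma)\mathbf{1}_n=k\,\mathbf{1}_n$ you wrote down.

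You are right to be uneasy about that step, and in fact it cannot be completed from net-regularity alone. Since the canonical marking is $\mu(u)=(-1)^{d^-(u)}$ and $d^-(u)=\tfrac{1}{2}(d(u)-k)$, the sign $\mu(u)$ depends on the parity of $d(u)$; when the underlying graph is not regular, $\mu(\Gamma)$ need not be constant and $A\mu=k\mu$ can fail. A concrete witness: take the bowtie (two triangles sharing the vertex $v_1$) with $\sigma(v_1v_2)=\sigma(v_1v_3)=\sigma(v_4v_5)=-$ and $\sigma(v_2v_3)=\sigma(v_1v_4)=\sigma(v_1v_5)=+$. This graph is $0$-net-regular, the canonical marking is $\mu=(+,-,-,-,-)^T$, one checks $(A\mu)_2=-2\neq 0=k\mu_2$, and a direct computation gives
\[
\chi_{A}(\lambda)=\frac{5\lambda^{2}-9}{\lambda(\lambda^{2}-5)}\neq \frac{5}{\lambda}.
\]
So your hoped-for ``careful bookkeeping'' cannot succeed as stated, and the lemma itself is false under the bare net-regularity hypothesis. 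What actually makes the argument (yours and the paper's) go through is regularity of the underlying graph: then every $d^-(u)=(r-k)/2$ has the same parity, $\mu(\Gamma)=\pm\mathbf{1}_n$, and the eigenrelation follows. This is indeed the only setting in which the paper applies the lemma (Proposition~\ref{Prop1} assumes $\Gamma_2$ is co-regular).
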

    \begin{proof}
        Here $\mu(\Gamma)$ is either ${\mathbf{1_n}}$ or ${\mathbf{-1_n}}$ or $\mathbf{0_n}$ according as $r$ is positive or negative or zero where $\mathbf{1_n}$ and $\mathbf{0_n}$ denotes all $1-$column vector and all $0-$column vector of order $n$ respectively.\\
        Thus, $A(\Gamma)\mu(\Gamma)=k\mu(\Gamma)$, and hence
        \begin{equation*}
        \begin{split}
            \big(\lambda I_n-A(\Gamma)\big)\mu(\Gamma)&=\lambda\mu(\Gamma)-A(\Gamma)\mu(\Gamma)\\
            &=\lambda\mu(\Gamma)-k\mu(\Gamma)\\
            &=(\lambda-k)\mu(\Gamma)
        \end{split}
        \end{equation*}
    Again, $\big(\lambda I_n-A(\Gamma)\big)\mu(\Gamma)=(\lambda-k)\mu(\Gamma)$\\
        $\implies\big(\lambda I_n-A(\Gamma)\big)^{-1}\mu(\Gamma)=\frac{1}{\lambda-k}\mu(\Gamma)$
        Now,\begin{equation*}
        \begin{split}
            \chi_{A(\Gamma)}(\lambda)&=\mu(\Gamma)^T\big(\lambda I_n-A(\Gamma)\big)^{-1}\mu(\Gamma)\\
            &=\frac{\mu(\Gamma)^T\mu(\Gamma)}{\lambda-k}\\
                &=\frac{n}{\lambda-k}
        \end{split}
        \end{equation*}
        Therefore, $\chi_{A(\Gamma)}(\lambda)=\frac{n}{\lambda-k}.$
    \end{proof}

    \begin{lemma}\label{Lemma2}
        Let $\Gamma=(K_{1,n},\sigma,\mu)$ be a signed star with $V(\Gamma)=\{v_1,v_2,...,v_{n+1}\}$ such that $d(v_1)=n$, then \[\chi_{A(\Gamma)}(\lambda)=\frac{(n+1)\lambda+2n\mu(v_1)}{\lambda^2-n}\]
    \end{lemma}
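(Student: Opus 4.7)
My plan is to compute $\chi_{A(\Gamma)}(\lambda) = \mu(\Gamma)^T (\lambda I_{n+1} - A(\Gamma))^{-1} \mu(\Gamma)$ by solving the linear system $(\lambda I_{n+1}-A(\Gamma))x = \mu(\Gamma)$ rather than by writing down the full inverse. The first step is to set up notation that reflects the star structure: writing $s_j := \sigma(v_1 v_j)$ for $j=2,\ldots,n+1$ and stacking these into $s := (s_2,\ldots,s_{n+1})^T$, the adjacency matrix reads
\[
A(\Gamma) = \begin{pmatrix} 0 & s^T \\ s & 0_{n\times n} \end{pmatrix}.
\]
Since each leaf has exactly one incident edge, the canonical marking forces $\mu(v_j)=s_j$ for $j\ge 2$; hence $\mu(\Gamma)=(\mu(v_1),\,s^T)^T$. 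Two trivial identities will do most of the work: $\mu(v_1)^2 = 1$ and $s^T s = n$.

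Second, I would split $x=(x_1,y^T)^T$ with $x_1\in\mathbb{C}$ and $y\in\mathbb{C}^n$. The system $(\lambda I_{n+1}-A(\Gamma))x=\mu(\Gamma)$ becomes
\[
\lambda x_1 - s^T y = \mu(v_1), \qquad -s\,x_1 + \lambda y = s.
\]
Solving the second block for $y=\lambda^{-1}(x_1+1)s$, substituting into the first, and using $s^T s=n$ yields $(\lambda^2-n)x_1 = n+\lambda\mu(v_1)$, which determines both $x_1$ and $y$ in closed form. The coronal is then $\chi_{A(\Gamma)}(\lambda) = \mu(v_1) x_1 + s^T y$; after clearing the common denominator $\lambda^2-n$ and applying $\mu(v_1)^2=1$, the numerator collapses to exactly $(n+1)\lambda + 2n\mu(v_1)$, giving the claim.

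The only real subtlety is sign bookkeeping: one must recognise that the canonical marking couples $\mu(\Gamma)$ to $\sigma$ via the same vector $s$ that appears in $A(\Gamma)$, which is precisely what makes the right-hand side of the second block equal $s$ and ultimately produces the clean cross term $2n\mu(v_1)$ rather than an awkward signed sum. Alternatively, one could derive the same formula by computing $(\lambda I_{n+1}-A(\Gamma))^{-1}$ through block inversion, in the spirit of Lemma~\ref{lemma 0}, and then forming the quadratic form; the route above is slightly shorter because it avoids writing out the bottom-right $n\times n$ block of the inverse.
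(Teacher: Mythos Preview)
Your proof is correct and follows essentially the same approach as the paper: both compute the vector $x=(\lambda I_{n+1}-A(\Gamma))^{-1}\mu(\Gamma)$ and then evaluate $\mu(\Gamma)^T x$. The only cosmetic difference is that the paper packages the solution as $x=\frac{1}{\lambda^2-n}X\mu(\Gamma)$ for a diagonal matrix $X$ and verifies it, whereas you derive the same $x$ by block elimination; your observation that $\mu(v_j)=s_j$ for the leaves is exactly what makes the paper's off-diagonal entries $\mu(v_j)$ in $A(\Gamma)$ agree with your $s$.
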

    \begin{proof}
        Here, $A(\Gamma)=
        \begin{bmatrix}
          0 &\mu(v_2) & \cdots &\mu(v_{n+1})\\
          \mu(v_2) &0 &\cdots &0\\
          \vdots &\vdots & \ddots &\vdots\\
          \mu(v_{n+1}) &0 &\cdots &0
        \end{bmatrix}.$\\
        Let $X=diag\bigg(\frac{n+\lambda\mu(v_1)}{\mu(v_1)},\lambda+\mu(v_1),\cdots,\lambda+\mu(v_1)\bigg)$ be $(n+1)\times(n+1)$ diagonal matrix.\\ 
        Then, $\big(\lambda I_{n+1}-A(\Gamma)\big)X\mu(\Gamma)=(\lambda^2-n)\mu(\Gamma)$\\
        Thus, \begin{equation*}
            \begin{split}
                \chi_{A(\Gamma)}(\lambda)&=\mu(\Gamma)^T\big(\lambda I_n-A(\Gamma)\big)^{-1}\mu(\Gamma)\\
                &=\frac{\mu(\Gamma)^TX\mu(\Gamma)}{\lambda^2-n}\\
                &=\frac{(n+1)\lambda+2n\mu(v_1)}{\lambda^2-n}
            \end{split}
        \end{equation*}
        Therefore, $\chi_{A(\Gamma)}(\lambda)=\frac{(n+1)\lambda+2n\mu(v_1)}{\lambda^2-n}$
    \end{proof}
    
    \begin{theorem}\label{Thm1}
        Let $\Gamma_1$,$\Gamma_2$ be arbitrary signed graphs on $n_1,n_2\geq1$ nodes. Then \[f(A(\Gamma_1*\Gamma_2);x)=f(A(\Gamma_2);x))^{n_1}\cdot \prod_{i=1}^{n_1}\bigg(x-\lambda(\Gamma_1)-\chi_{A(\Gamma_2)}(x)\lambda_i(\Gamma_1)^2\bigg).\]
    \end{theorem}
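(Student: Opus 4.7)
The plan is to write the signed adjacency matrix of $\Gamma_1*\Gamma_2$ as a $2\times 2$ block matrix with respect to the partition $V(\Gamma_1)\cup W_1\cup\cdots\cup W_{n_2}$ (grouped so that $V(\Gamma_1)$ forms the first block of size $n_1$ and the $n_1$ copies of $\Gamma_2$ form the second block of size $n_1n_2$), and then apply the Schur complement (Lemma~\ref{lemma 0}) to $xI-A(\Gamma_1*\Gamma_2)$.

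First I would identify the blocks. The top-left block is simply $A(\Gamma_1)$, and the bottom-right block is $I_{n_1}\otimes A(\Gamma_2)$ since the $n_1$ copies of $\Gamma_2$ are vertex-disjoint and carry their own signatures. The off-diagonal block $N$ records the new cross-edges: vertex $u_k$ is joined to $v_j^i$ precisely when $u_k\sim u_i$ in $\Gamma_1$, with sign $\mu_1(u_k)\mu_2(v_j)$. The central observation is that this forces $N$ to factor as a Kronecker product, because the column index $(i,j)$ separates cleanly: one factor is an $n_1\times n_1$ matrix involving the $\Gamma_1$ adjacency with the marking $\phi(\Gamma_1)$ absorbed so that the overall edge sign matches $\mu_1(u_k)\mu_2(v_j)$, and the other factor is the marking row vector $\mu(\Gamma_2)^T$. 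I expect this unfolding of the sign convention to be the main technical step.

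Once $N$ is in Kronecker form, the Schur complement applied with $A_{22}=I_{n_1}\otimes(xI_{n_2}-A(\Gamma_2))$ (invertible as a matrix over $\mathbb{C}(x)$) yields
\[
f(A(\Gamma_1*\Gamma_2);x)=\det(A_{22})\cdot\det\bigl(xI_{n_1}-A(\Gamma_1)-N\,A_{22}^{-1}\,N^T\bigr).
\]
Kronecker determinant property~(v) gives $\det(A_{22})=f(A(\Gamma_2);x)^{n_1}$. For the remaining factor, $A_{22}^{-1}=I_{n_1}\otimes(xI_{n_2}-A(\Gamma_2))^{-1}$, and the mixed-product rule $(A\otimes B)(C\otimes D)=AC\otimes BD$ collapses $N\,A_{22}^{-1}\,N^T$ into the scalar $\mu(\Gamma_2)^T(xI_{n_2}-A(\Gamma_2))^{-1}\mu(\Gamma_2)=\chi_{A(\Gamma_2)}(x)$ times the $\Gamma_1$-side contraction, which (after using $\phi(\Gamma_1)^2=I_{n_1}$) reduces to $A(\Gamma_1)^2$.

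Finally, since $A(\Gamma_1)$ is real symmetric, it is orthogonally diagonalizable, and $A(\Gamma_1)$ and $A(\Gamma_1)^2$ share the same eigenbasis; hence
\[
\det\bigl(xI_{n_1}-A(\Gamma_1)-\chi_{A(\Gamma_2)}(x)\,A(\Gamma_1)^2\bigr)=\prod_{i=1}^{n_1}\bigl(x-\lambda_i(\Gamma_1)-\chi_{A(\Gamma_2)}(x)\,\lambda_i(\Gamma_1)^2\bigr),
\]
which, multiplied by $f(A(\Gamma_2);x)^{n_1}$, yields the claimed identity. The hardest part of the argument is the bookkeeping of signs in the cross block $N$: one has to rewrite the edge sign $\mu_1(u)\mu_2(v)$ so that after conjugation by $\phi(\Gamma_1)$ the underlying structure produces $A(\Gamma_1)$ on the $\Gamma_1$ side, so that the Schur contraction cleanly delivers $\chi_{A(\Gamma_2)}(x)\,A(\Gamma_1)^2$ rather than a quantity involving only the unsigned adjacency of $G_1$.
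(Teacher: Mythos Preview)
Your plan is essentially the paper's own proof: block form of $A(\Gamma_1*\Gamma_2)$, Schur complement via Lemma~\ref{lemma 0}, Kronecker mixed-product rule to collapse the cross term to $\chi_{A(\Gamma_2)}(x)A(\Gamma_1)^2$ (using $\phi(\Gamma_1)^2=I_{n_1}$), and then simultaneous diagonalisation of $A(\Gamma_1)$ and $A(\Gamma_1)^2$. One small inconsistency to clean up: you invoke the partition $V(\Gamma_1)\cup W_1\cup\cdots\cup W_{n_2}$ from~(\ref{eqn2}) but then write the lower-right block as $I_{n_1}\otimes A(\Gamma_2)$; under that partition the block is $A(\Gamma_2)\otimes I_{n_1}$ (as in the paper), whereas $I_{n_1}\otimes A(\Gamma_2)$ corresponds to ordering the second block by copies of $\Gamma_2$ --- either ordering works, but pick one and keep the off-diagonal Kronecker factor consistent with it.
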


    \begin{proof}
        With respect to the partition mentioned in \ref{eqn2}, the adjacency matrix of $\Gamma_1*\Gamma_2$ can be written as, 
        \[A(\Gamma_1*\Gamma_2)=\begin{bmatrix}
            A(\Gamma_1)& \mu(\Gamma_2)^T\otimes A(\Gamma_1)\phi(\Gamma_1)\\ \mu(\Gamma_2)\otimes \phi(\Gamma_1)A(\Gamma_1)& A(\Gamma_2)\otimes I_{n_1}
        \end{bmatrix}\]

        Then using Lemma \ref{lemma 0} the adjacency characteristics polynomial of $\Gamma_1*\Gamma_2$ is given by,
        \begin{equation*}
        \begin{split}
            f(A(\Gamma_1*\Gamma_2);x)&= \text{det} 
           \begin{bmatrix}
          xI_{n_1}-A(\Gamma_1)& -\mu(\Gamma_2)^T\otimes A(\Gamma_1)\phi(\Gamma_1)\\ -\mu(\Gamma_2)\otimes \phi(\Gamma_1)A(\Gamma_1)& xI_{n_1n_2}-A(\Gamma_2)\otimes I_{n_1}       
        \end{bmatrix}\\
        &= \begin{bmatrix}
                xI_{n_1}-A(\Gamma_1)& -\mu(\Gamma_2)^T\otimes A(\Gamma_1)\phi(\Gamma_1)\\ -\mu(\Gamma_2)\otimes \phi(\Gamma_1)A(\Gamma_1)& (xI_{n_2}-A(\Gamma_2))\otimes I_{n_1}
        \end{bmatrix}\\
        &= \text{det}\big(\big(xI_{n_2}-A(\Gamma_2)\big)\otimes I_{n_1}\big) \cdot \text{det}(S).
        \end{split}
        \end{equation*}
        here, 
        \begin{equation*}
            \begin{split}
                S&=\big\{xI_{n_1}-A(\Gamma_1)\big\}-\big\{\mu(\Gamma_2)^T\otimes A(\Gamma_1)\phi(\Gamma_1)\big\}\big\{\big(xI_{n_2}-A(\Gamma_2)\big)\otimes I_{n_1}\big\}^{-1}\big\{\mu(\Gamma_2)\otimes \phi(\Gamma_1)A(\Gamma_1)\big\}\\
                &=\big\{xI_{n_1}-A(\Gamma_1)\big\}-\big\{\mu(\Gamma_2)^T\otimes A(\Gamma_1)\phi(\Gamma_1)\big\}\big\{\big(xI_{n_2}-A(\Gamma_2)\big)^{-1}\otimes I_{n_1}^{-1}\big\}\big\{\mu(\Gamma_2)\otimes \phi(\Gamma_1)A(\Gamma_1)\big\}\\
                &=\big\{xI_{n_1}-A(\Gamma_1)\big\}-\big\{\mu(\Gamma_2)^T\big(xI_{n_2}-A(\Gamma_2)\big)^{-1}\mu(\Gamma_2)\big\}\big\{A(\Gamma_1)\phi(\Gamma_1)I_{n_1}\phi(\Gamma_1)A(\Gamma_1)\big\}\\
                &=\big\{xI_{n_1}-A(\Gamma_1)\big\}-\chi_{\Gamma_2}(x)\big\{A(\Gamma_1)\big\}^2\\
            \end{split}
        \end{equation*}
        Therefore,
        \begin{equation*}
            \begin{split}
                \text{det}(S)&=\text{det}\big\{\big\{xI_{n_1}-A(\Gamma_1)\big\}-\chi_{\Gamma_2}(x)\big\{A(\Gamma_1)\big\}^2\big\}\\
                &=\prod_{i=1}^{n_1}\big(x-\lambda_i(\Gamma_1)-\chi_{\Gamma_2}(x)\lambda_i(\Gamma_1)^2\big)
            \end{split}
        \end{equation*}
        Here we use the fact that if $\lambda$ is an eigenvalue of a matrix $A$ and $f(A)$ is a polynomial of $A$ then $f(\lambda)$ is an eigenvalue of $f(A).$ Also,\\
       \begin{equation*}
           \text{det}\big(\big(xI_{n_2}-A(\Gamma_2)\big)\otimes I_{n_1}\big)
           =\text{det}\big(\big(xI_{n_2}-A(\Gamma_2)\big)^{n_1}\cdot \text{det}\big(I_{n_1}\big)^{n_2}
           =\text{det}\big((xI_{n_2}-A(\Gamma_2)\big)^{n_1}
       \end{equation*} 
       \[\therefore,~ f(A(\Gamma_1*\Gamma_2);x)=f\big(A(\Gamma_2);x)\big)^{n_1}\cdot \prod_{i=1}^{n_1}\bigg(x-\lambda(\Gamma_1)-\chi_{A(\Gamma_2)}(x)\lambda_i(\Gamma_1)^2\bigg).~~~~~~~~~~~~~~~~~\]
    \end{proof}

    \begin{prop}\label{Prop1}
        Let $\Gamma_1=(G_1,\sigma_1,\mu_1)$ be any signed graph on $n_1$ nodes and $\Gamma_2=(G_2,\sigma_2,\mu_2)$ be $(r,k)$ co-regular signed graph on $n_2$ nodes, where $n_1\geq1,~n_2\geq2$ and $r\geq1$. Suppose that the spectrum of$~~\Gamma_1$ and $\Gamma_2$ are $\{\alpha_1,\alpha_2,\cdots,\alpha_{n_1}\}$, and $\{\beta_1,\beta_2\cdots,\beta_{n_2}\}$ respectively and multiplicity of eigenvalue $k$ of $\Gamma_2$ is $p$ then the spectrum of $\Gamma_1*\Gamma_2$ is given by\\
        \begin{enumerate}[label=(\roman*)]
            \item $2$ eigenvalues $\frac{\alpha_i+k\pm\sqrt{(\alpha_i-k)^2+4n_2\alpha_i^2}}{2}$, for each $i=1,2,\cdots,n_1.$
            \item $n_1(n_2-p)$ eigenvalues $\beta_i$ each with multiplicity $n_1$ for every eigenvalue $\beta_i(\neq k)$ of $\Gamma_2.$
            \item eigenvalue $k$ with multiplicity $n_1(p-1).$
        \end{enumerate}
    \end{prop}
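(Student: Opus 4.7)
The plan is to plug the known quantities into the formula from Theorem \ref{Thm1} and then read off the roots by elementary algebra.

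First, since $\Gamma_2$ is $(r,k)$-co-regular it is in particular $k$-net-regular, so Lemma \ref{Lemma1} applies and gives
\[
\chi_{A(\Gamma_2)}(x)=\frac{n_2}{x-k}.
\]
Substituting this into Theorem \ref{Thm1} and combining each factor of the product over the common denominator $x-k$, I would rewrite
\[
f(A(\Gamma_1*\Gamma_2);x)=\frac{f(A(\Gamma_2);x)^{n_1}}{(x-k)^{n_1}}\cdot\prod_{i=1}^{n_1}\bigl(x^2-(\alpha_i+k)x+k\alpha_i-n_2\alpha_i^2\bigr).
\]

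Next I would handle the two pieces separately. For the quadratic in the product, a direct application of the quadratic formula yields roots
\[
\frac{(\alpha_i+k)\pm\sqrt{(\alpha_i+k)^2-4(k\alpha_i-n_2\alpha_i^2)}}{2}=\frac{\alpha_i+k\pm\sqrt{(\alpha_i-k)^2+4n_2\alpha_i^2}}{2},
\]
after expanding the discriminant and noting the cancellation of the $2k\alpha_i$ terms; this produces the $2n_1$ eigenvalues listed in item (i).

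For the remaining rational factor, I would use that $k$ is an eigenvalue of $A(\Gamma_2)$ with multiplicity $p$, so $f(A(\Gamma_2);x)=(x-k)^{p}\,g(x)$ where $g(x)=\prod_{\beta_i\neq k}(x-\beta_i)$. Then
\[
\frac{f(A(\Gamma_2);x)^{n_1}}{(x-k)^{n_1}}=(x-k)^{n_1(p-1)}\,g(x)^{n_1},
\]
which contributes the eigenvalue $k$ with multiplicity $n_1(p-1)$, accounting for (iii), together with each eigenvalue $\beta_i\neq k$ of $\Gamma_2$ repeated $n_1$ times, accounting for (ii). A final multiplicity check $2n_1+n_1(n_2-p)+n_1(p-1)=n_1(n_2+1)$ confirms all $|V(\Gamma_1*\Gamma_2)|$ eigenvalues are accounted for. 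There is no real obstacle beyond bookkeeping; the only place to be slightly careful is the discriminant simplification and the multiplicity cancellation between $(x-k)^{pn_1}$ in the numerator and $(x-k)^{n_1}$ in the denominator.
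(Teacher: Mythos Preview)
Your proposal is correct and follows essentially the same approach as the paper: both invoke Lemma \ref{Lemma1} to obtain $\chi_{A(\Gamma_2)}(x)=\frac{n_2}{x-k}$, substitute into Theorem \ref{Thm1}, clear the denominator $(x-k)$ to obtain the quadratic whose roots give item (i), and then identify items (ii) and (iii) from the factor $f(A(\Gamma_2);x)^{n_1}$ after cancelling $(x-k)^{n_1}$. If anything, your explicit factorization $f(A(\Gamma_2);x)=(x-k)^p g(x)$ and the final multiplicity count make the bookkeeping for (ii) and (iii) slightly more transparent than the paper's terse statement that ``the remaining $n_1(p-1)$ eigenvalues must be equal to the pole.''
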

    \begin{proof}
        By Lemma \ref{Lemma1}, \[\chi_{A(\Gamma)}(\lambda)=\frac{n}{\lambda-k}\]
        The only pole of $\chi_{A(\Gamma)}(\lambda)$ is $\lambda=k$, which is equivalent to the eigenvalue $\lambda=k$ of $\Gamma_2$. By Theorem \ref{Thm1} the spectrum of $\Gamma_1*\Gamma_2$ is given by,
        \begin{enumerate}[label=(\roman*)]
            \item Then $2n_1$ eigenvalues are obtained by solving\\
               \begin{equation*}
               \begin{split}
                   &x-\alpha_i-\frac{n_2}{x-k}\alpha_i^2=0\\
                    \implies& x^2-(k+\alpha_i)x+\alpha_ik-n_2\alpha_i^2=0\\
                    \implies&x=\frac{\alpha_i+k\pm\sqrt{(\alpha_i-k)^2+4n_2\alpha_i^2}}{2},
               \end{split}
                \end{equation*}
            for each value of $\alpha_i(i=1,2,\cdots,n_1)$ of $\Gamma_1.$
            \item the other $n_1(n_2-p)$ eigenvalues are $\beta_i$ each with multiplicity $n_1$ for every eigenvalue $\beta_i(\neq k)$ of $\Gamma_2.$
            \item The remaining $n_1(p-1)$ eigenvalues must be equal to the pole $\lambda=k$ of $\chi_{A(\Gamma_2)}(\lambda).$
        \end{enumerate}
    \end{proof}

    \begin{prop}
        Let $\Gamma_1=(G_1,\sigma_1,\mu_1)$ be any signed graph on $n_1$ nodes and $\Gamma_2=(K_{1,k},\sigma_2,\mu_2)$ be a signed star on $n_2+1$ nodes with $V(\Gamma_2)=\{v_1,v_2,\cdots,v_{n_2+1}\}$ where $d(v_1)=n$. Suppose that the spectrum of $\Gamma_1$ is $\{\alpha_1,\alpha_2,\cdots,\alpha_{n_1}\}$ then the spectrum of $\Gamma_1*\Gamma_2$ is given by
        \begin{enumerate}[label=(\roman*)]
            \item The eigenvalue $0$ with multiplicity $n_1(n_2-1).$
            \item $3n_1$ eigenvalues obtained by solving the equation $x^3-\alpha_ix^2-\{(n+1)\alpha_i^2+n\}x+n\alpha_i(1-2\alpha_i\mu(v_1))=0$ for each $\alpha_i(i=1,2,\cdots,n_1)$ of $\Gamma_1.$
        \end{enumerate}
    \end{prop}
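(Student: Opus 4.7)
The plan is to mirror the proof of Proposition \ref{Prop1} but with $\Gamma_2$ now a signed star, so that the $A$-coronal formula from Lemma \ref{Lemma2} takes the place of the one from Lemma \ref{Lemma1}. The two ingredients we need are the characteristic polynomial $f(A(\Gamma_2);x)$ and the coronal $\chi_{A(\Gamma_2)}(x)$; then Theorem \ref{Thm1} gives $f(A(\Gamma_1*\Gamma_2);x)$ essentially for free, and the proposition is a matter of reading off the roots.

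First I would compute $f(A(\Gamma_2);x)$. Since $\Gamma_2=K_{1,n_2}$ is a signed star, its adjacency matrix has rank at most $2$, so $0$ is an eigenvalue of multiplicity at least $n_2-1$. The nonzero eigenvalues are determined by $\mathrm{tr}\,A(\Gamma_2)=0$ and $\mathrm{tr}\,A(\Gamma_2)^2=2n_2$ (each signed entry squares to $1$), so they must be $\pm\sqrt{n_2}$. Hence
\[
f(A(\Gamma_2);x)=x^{n_2-1}(x^2-n_2).
\]
Next, Lemma \ref{Lemma2} gives $\chi_{A(\Gamma_2)}(x)=\dfrac{(n+1)x+2n\mu(v_1)}{x^2-n}$ (here $n=n_2$, as in the statement).

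Substituting both expressions into Theorem \ref{Thm1} yields
\[
f(A(\Gamma_1*\Gamma_2);x)=\bigl[x^{n_2-1}(x^2-n_2)\bigr]^{n_1}\prod_{i=1}^{n_1}\left(x-\alpha_i-\frac{(n+1)x+2n\mu(v_1)}{x^2-n}\,\alpha_i^2\right).
\]
Clearing the denominator inside the product absorbs exactly $(x^2-n)^{n_1}$ from the prefactor; a short algebraic simplification of the numerator identifies it with the cubic
\[
p_i(x)=x^3-\alpha_i x^2-\bigl((n+1)\alpha_i^2+n\bigr)x+n\alpha_i\bigl(1-2\alpha_i\mu(v_1)\bigr),
\]
leaving
\[
f(A(\Gamma_1*\Gamma_2);x)=x^{n_1(n_2-1)}\prod_{i=1}^{n_1}p_i(x).
\]
From this factorisation one reads off (i) the eigenvalue $0$ with multiplicity $n_1(n_2-1)$ and (ii) the $3n_1$ roots of the $n_1$ cubics $p_i$, establishing the claim.

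I do not expect any real obstacle: the computation of $f(A(\Gamma_2);x)$ via the rank-plus-trace argument and the cancellation of $(x^2-n)^{n_1}$ are both routine. The only place that demands a little care is the algebra that collapses $(x-\alpha_i)(x^2-n)-\alpha_i^2[(n+1)x+2n\mu(v_1)]$ into the cubic $p_i(x)$; one should also check that no spurious factor of $x^2-n$ arises in $p_i(x)$ for generic $\alpha_i$, so that the count of roots is correct and matches the degree $n_1(n_2+1)$ of $f(A(\Gamma_1*\Gamma_2);x)$.
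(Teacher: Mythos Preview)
Your proposal is correct and follows essentially the same route as the paper: identify the spectrum of the signed star $\Gamma_2$ as $\{\pm\sqrt{n_2},0^{(n_2-1)}\}$, invoke Lemma~\ref{Lemma2} for $\chi_{A(\Gamma_2)}$, plug into Theorem~\ref{Thm1}, and clear the denominator $x^2-n$ to obtain the cubic. Your write-up is in fact a bit more explicit than the paper's, which simply states the star spectrum and the coronal and then writes down the cubic without displaying the cancellation of $(x^2-n)^{n_1}$ against the denominators.
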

    \begin{proof}
        The spectrum of $\Gamma_2$ is $\big(-\sqrt{n_2},\sqrt{n_2},0^{(n_2-1)}\big)$. By Lemma \ref{Lemma2},\[\chi_{A(\Gamma)}(\lambda)=\frac{(n+1)\lambda+2n\mu(v_1)}{\lambda^2-n}\]
        The poles of $\chi_{A(\Gamma_2)}(\lambda)$ are $\lambda=\pm\sqrt{n}$ which is equivalent to the maximal and minimal eigenvalues of $\Gamma_2.$ By Theorem \ref{Thm1}, the spectrum of $\Gamma_1*\Gamma_2$ is given by,
        \begin{enumerate}[label=(\roman*)]
            \item The eigenvalue $0$ with multiplicity $n_1(n_2-1)$.
            \item The other $3n_1$ eigenvalues are obtained by solving the cubic equation,
            \begin{equation*}
                \begin{split}
                    &x-\alpha_i-\chi_{A(\Gamma_2)}(x)\alpha_i^2=0\\
                    \implies &x-\alpha_i-\frac{(n+1)x+2n\mu(v_1)}{x^2-n}\alpha_i^2=0\\
                    \implies& x^3-\alpha_ix^2-\{(n+1)\alpha_i^2+n\}x+n\alpha_i(1-2\alpha_i\mu(v_1))=0
                \end{split}
            \end{equation*}
            for each eigenvalue $\alpha_i(i=1,2,\cdots,n_1)$ of $\Gamma_1.$
        \end{enumerate}
    \end{proof}

    \noindent We can use the neighbourhood corona product to construct may $A-$co-spectral signed graphs, as stated in the following corollary of Theorem \ref{Thm1}.

    \begin{corollary}
       Let $\Gamma_1$ and $\Gamma_2$ be $A$-co-spectral  signed graphs, and $\Gamma$ an arbitrary signed graph. Then
       \begin{enumerate}[label=(\roman*)]
           \item $\Gamma_1*\Gamma$ and $\Gamma_2*\Gamma$ are $A$-co-spectral.
           \item $\Gamma*\Gamma_1$ and $\Gamma*\Gamma_2$ are $A$-co-spectral if $\chi_{A(\Gamma_1)}(x)=\chi_{A(\Gamma_2)}(x)$
       \end{enumerate}
    \end{corollary}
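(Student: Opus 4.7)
The plan is to derive both parts of the corollary as immediate consequences of the closed formula for $f(A(\Gamma_1*\Gamma_2);x)$ established in Theorem \ref{Thm1}, which expresses the characteristic polynomial of the neighbourhood corona purely in terms of (a)~the eigenvalues of the first factor and its vertex count, and (b)~the characteristic polynomial and the $A$-coronal of the second factor. Under this formula the two factors enter asymmetrically, so parts (i) and (ii) require slightly different hypotheses.

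For part (i), I will write down
\[
f(A(\Gamma_j*\Gamma);x)=f(A(\Gamma);x)^{n_1}\cdot\prod_{i=1}^{n_1}\bigl(x-\lambda_i(\Gamma_j)-\chi_{A(\Gamma)}(x)\lambda_i(\Gamma_j)^2\bigr),\qquad j=1,2,
\]
where $n_1=|V(\Gamma_1)|=|V(\Gamma_2)|$ by $A$-co-spectrality. Since $\Gamma_1$ and $\Gamma_2$ share the same adjacency spectrum, $\lambda_i(\Gamma_1)=\lambda_i(\Gamma_2)$ for every $i$, so the two products are identical term by term, and the prefactor $f(A(\Gamma);x)^{n_1}$ is the same on both sides. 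Hence $f(A(\Gamma_1*\Gamma);x)=f(A(\Gamma_2*\Gamma);x)$, proving (i).

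For part (ii), I will apply Theorem \ref{Thm1} with $\Gamma$ as the first factor and $\Gamma_j$ as the second:
\[
f(A(\Gamma*\Gamma_j);x)=f(A(\Gamma_j);x)^{n}\cdot\prod_{i=1}^{n}\bigl(x-\lambda_i(\Gamma)-\chi_{A(\Gamma_j)}(x)\lambda_i(\Gamma)^2\bigr),\qquad j=1,2,
\]
with $n=|V(\Gamma)|$. The second factor now contributes through two quantities, $f(A(\Gamma_j);x)$ and $\chi_{A(\Gamma_j)}(x)$. The prefactors agree because $\Gamma_1$ and $\Gamma_2$ are $A$-co-spectral, and the products inside agree precisely under the additional hypothesis $\chi_{A(\Gamma_1)}(x)=\chi_{A(\Gamma_2)}(x)$. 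Combining these gives $f(A(\Gamma*\Gamma_1);x)=f(A(\Gamma*\Gamma_2);x)$.

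There is no real obstacle here; the content of the corollary is entirely captured by Theorem \ref{Thm1}, and the only subtlety worth emphasising is the asymmetric role of the two factors, which is why part (ii) genuinely needs the extra coronal assumption while part (i) does not. I would close by remarking that this makes neighbourhood corona a convenient tool for manufacturing infinite families of $A$-co-spectral signed graphs from a single co-spectral pair.
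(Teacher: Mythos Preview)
Your proposal is correct and matches the paper's approach exactly: the paper simply records this result as an immediate corollary of Theorem~\ref{Thm1} without spelling out a proof, and your argument is precisely the straightforward unpacking of that formula that the paper leaves implicit. There is nothing to add or correct.
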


\subsubsection{Q-spectra of signed graph}

    \begin{lemma} \label{Lemma 3}
        Let $\Gamma=(G,\sigma,\mu)$ be a co-regular signed graph of order $n$ with co-regularity pair $(r,k)$ then the signed $Q(\Gamma)$-coronal of $\Gamma$ is \[\chi_{Q(\Gamma)}(\lambda)=\frac{n}{\lambda-r-k}\] 
    \end{lemma}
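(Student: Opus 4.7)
The plan is to mirror the proof of Lemma \ref{Lemma1} almost verbatim, with a single modification to pass from the adjacency matrix to the signless Laplacian. The hypothesis of co-regularity with pair $(r,k)$ supplies two usable facts: the underlying graph is $r$-regular, so $D(\Gamma)=rI_n$ and hence $Q(\Gamma)=rI_n+A(\Gamma)$; and $\Gamma$ is $k$-net-regular, so by the same reasoning used in Lemma \ref{Lemma1} the marking vector $\mu(\Gamma)$ (which is $\pm\mathbf{1}_n$) satisfies $A(\Gamma)\mu(\Gamma)=k\mu(\Gamma)$.

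Combining these two facts gives $Q(\Gamma)\mu(\Gamma)=(r+k)\mu(\Gamma)$, so $\mu(\Gamma)$ is an eigenvector of $Q(\Gamma)$ with eigenvalue $r+k$. Therefore
\[
(\lambda I_n-Q(\Gamma))\mu(\Gamma)=(\lambda-r-k)\mu(\Gamma),
\]
and inverting the characteristic matrix on this eigenvector yields
\[
(\lambda I_n-Q(\Gamma))^{-1}\mu(\Gamma)=\frac{1}{\lambda-r-k}\,\mu(\Gamma).
\]
Substituting into the definition $\chi_{Q(\Gamma)}(\lambda)=\mu(\Gamma)^T(\lambda I_n-Q(\Gamma))^{-1}\mu(\Gamma)$ and using $\mu(\Gamma)^T\mu(\Gamma)=n$ (since the entries are $\pm 1$) delivers the claimed identity.

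There is no real obstacle here; this is a routine adaptation of Lemma \ref{Lemma1}. The only conceptual content is the observation that the co-regularity pair $(r,k)$ translates the shift from $A(\Gamma)$ to $Q(\Gamma)=D(\Gamma)+A(\Gamma)$ into a shift of the eigenvalue carried by $\mu(\Gamma)$ from $k$ to $r+k$, which is immediate because $D(\Gamma)$ is the scalar matrix $rI_n$. Everything else in the proof of Lemma \ref{Lemma1} transfers without change.
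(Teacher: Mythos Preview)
Your proof is correct and follows essentially the same approach as the paper's own proof, which simply asserts that ``proceeding as on Lemma~\ref{Lemma1}'' one obtains $Q(\Gamma)\mu(\Gamma)=(r+k)\mu(\Gamma)$ and then computes the coronal directly. Your version merely makes explicit the one extra ingredient, namely that $D(\Gamma)=rI_n$ by $r$-regularity so that the eigenvalue carried by $\mu(\Gamma)$ shifts from $k$ to $r+k$.
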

    \begin{proof}
        Proceeding as on Lemma \ref{Lemma1} we can say that $Q(\Gamma)\mu(\Gamma)=(r+k)\mu(\Gamma.)$ Thus,
        \begin{equation*}
            \begin{split}
                \chi_{Q(\Gamma)}(\lambda)&=\mu(\Gamma)^T(\lambda I_n-Q(\Gamma))^{-1}\mu(\Gamma)\\
                &=\frac{\mu(\Gamma)^T\mu(\Gamma)}{\lambda-r-k}\\
                &=\frac{n}{\lambda-r-k}
            \end{split}
        \end{equation*}
    \end{proof}

    \begin{lemma}\label{lemma 4}
        Let $\Gamma=(K_{1,n},\sigma,\mu)$ be a signed star with $V(\Gamma)=\{v_1,v_2,\cdots,v_{n+1}\}$ such that $d(v_1)=n$, then  \[
 \chi_{Q(\Gamma)}(\lambda)=\frac{(n+1)\lambda-(n^2+1)+2n\mu(v_1)}{\lambda(\lambda-(n+1))}
 \]
        
    \end{lemma}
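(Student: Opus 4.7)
The plan is to mimic the strategy of Lemma 2.4 (the adjacency analogue for the star). Write $Q(\Gamma)=D(\Gamma)+A(\Gamma)$ in block form, where $v_1$ is the center (degree $n$) and $v_2,\ldots,v_{n+1}$ are the leaves (degree $1$). Since each leaf has a unique incident edge, canonical marking gives $\mu(v_j)=\sigma(v_1v_j)$ for $j\geq 2$, so the off-diagonal entries of $Q(\Gamma)$ in the first row and column are precisely $\mu(v_2),\ldots,\mu(v_{n+1})$, while the diagonal is $(n,1,1,\ldots,1)$.

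Rather than invert $\lambda I_{n+1}-Q(\Gamma)$ directly, I would set $y=(\lambda I_{n+1}-Q(\Gamma))^{-1}\mu(\Gamma)$ and solve the linear system $(\lambda I_{n+1}-Q(\Gamma))\,y=\mu(\Gamma)$. The leaf equations decouple and read $-\mu(v_j)y_1+(\lambda-1)y_j=\mu(v_j)$, so using $\mu(v_j)^2=1$ one obtains
\begin{equation*}
y_j=\frac{\mu(v_j)\,(1+y_1)}{\lambda-1},\qquad j=2,\ldots,n+1.
\end{equation*}
Substituting this into the center equation $(\lambda-n)y_1-\sum_{j=2}^{n+1}\mu(v_j)y_j=\mu(v_1)$ and again using $\mu(v_j)^2=1$, the sum collapses to $n(1+y_1)/(\lambda-1)$, producing a single scalar equation whose solution is
\begin{equation*}
y_1=\frac{\mu(v_1)(\lambda-1)+n}{\lambda\bigl(\lambda-(n+1)\bigr)},
\end{equation*}
after using the identity $(\lambda-n)(\lambda-1)-n=\lambda(\lambda-(n+1))$.

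The final step is to assemble $\chi_{Q(\Gamma)}(\lambda)=\mu(\Gamma)^{T}y=\mu(v_1)y_1+\sum_{j=2}^{n+1}\mu(v_j)y_j=\mu(v_1)y_1+n(1+y_1)/(\lambda-1)$. Here is the one place that genuinely needs care: after clearing denominators over $\lambda(\lambda-(n+1))(\lambda-1)$, the numerator becomes $\bigl(\mu(v_1)(\lambda-1)+n\bigr)^{2}+n\lambda(\lambda-(n+1))$. Expanding with $\mu(v_1)^{2}=1$ and collecting by degree in $\lambda$ yields $(n+1)\lambda^{2}+(2n\mu(v_1)-n^{2}-n-2)\lambda+\bigl(n^{2}+1-2n\mu(v_1)\bigr)$, which one verifies factors as $(\lambda-1)\bigl[(n+1)\lambda-(n^{2}+1)+2n\mu(v_1)\bigr]$; cancelling the common $(\lambda-1)$ delivers the claimed formula. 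The main obstacle is purely algebraic: recognizing that $(\lambda-1)$ divides the numerator so that the final rational function has denominator $\lambda(\lambda-(n+1))$ rather than the a priori cubic $\lambda(\lambda-(n+1))(\lambda-1)$.
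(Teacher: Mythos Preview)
Your argument is correct and is in substance the same computation the paper carries out: both determine the vector $y=(\lambda I_{n+1}-Q(\Gamma))^{-1}\mu(\Gamma)$ and then evaluate $\mu(\Gamma)^T y$. The paper packages the solution as an ansatz diagonal matrix $X=\operatorname{diag}\!\big(\tfrac{(\lambda-1)\mu(v_1)+n}{\mu(v_1)},\,\lambda-n+\mu(v_1),\ldots,\lambda-n+\mu(v_1)\big)$ and verifies $(\lambda I_{n+1}-Q(\Gamma))X\mu(\Gamma)=\lambda(\lambda-(n+1))\mu(\Gamma)$, so that $y=X\mu(\Gamma)/[\lambda(\lambda-(n+1))]$ and $\chi_{Q(\Gamma)}(\lambda)=\mu(\Gamma)^T X\mu(\Gamma)/[\lambda(\lambda-(n+1))]$ drops out immediately with the correct denominator. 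Your derivation recovers exactly the same $y$ (indeed $y_j=X_{jj}\mu(v_j)/[\lambda(\lambda-(n+1))]$), but because you combine $\mu(v_1)y_1$ and $n(1+y_1)/(\lambda-1)$ over the cubic denominator $\lambda(\lambda-(n+1))(\lambda-1)$ you must then spot and cancel the factor $(\lambda-1)$; the paper's presentation sidesteps that step by having the right quadratic denominator from the outset. Either way the proof goes through.
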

     \begin{proof}
         We have $Q(\Gamma)=
        $\begin{math}
        \begin{bmatrix}
           n &\mu(v_2) &\mu(v_3) &\cdots &\mu(v_{n+1}) \\
           \mu(v_2)   &1 &0 &\cdots &0 \\
           \vdots &\vdots &\vdots &   &\vdots\\
           \mu(v_{n+1}) &0  &0 &\cdots &1
         \end{bmatrix}
        \end{math}\\
        Taking $X=diag\Big(\frac{(\lambda-1) \mu(v_1)+n}{\mu(v_1)}, \lambda-n+\mu(v_1), \cdots, \lambda-n+\mu(v_1)\Big)$ be $(n+1)\times(n+1)$ diagonal matrix with first diagonal entry as $\frac{(\lambda-1)\mu(v_1)+n}{\mu(n_1)}$ and  remaining $n$ diagonal entries as $\lambda-n+\mu(v_1).$
        \\Then $(\lambda I_{n+1}-L(\Gamma))X\mu(\Gamma)=\lambda(\lambda-(n+1))\mu(\Gamma).$ Thus, 
        \begin{align*}
             \chi_{Q(\Gamma)}(\lambda)=&\mu(\Gamma)^T\big(\lambda I_n - Q(\Gamma)\big)^{-1}\mu(\Gamma)\\ \\
             =&\frac{\mu(\Gamma)^TX\mu(\Gamma)}{\lambda\big(\lambda-(n+1)\big)}\\ \\
             =&\frac{(n+1)\lambda-(n^2+1)+2n\mu(v_1)}{\lambda\big(\lambda-(n+1)\big)}
        \end{align*}\\
    \end{proof}

\begin{theorem} \label{Thm2}
    Let $\Gamma_1$ be a $r_1$-regular signed graph on $n_1$ nodes and $\Gamma_2$ be any signed graph on $n_2$ nodes, where $n_1\geq2,n_2\geq1$ and $r_1\geq1.$ Then,
    \[f\big(Q(\Gamma_1*\Gamma_2);x\big)=\big(f\big(Q(\Gamma_2);x-r_1\big)\big)^{n_1}\cdot \prod_{i=1}^{n_1}\bigg(x-n_2r_1-\gamma_i(\Gamma_1)-\chi_{Q(\Gamma_2)}(x-r_1)(\gamma_i(\Gamma_1)-r_1)^2\bigg).\]
\end{theorem}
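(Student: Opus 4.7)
The plan is to repeat the Schur-complement argument of Theorem \ref{Thm1}, adapted to the signless Laplacian. First I would write $Q(\Gamma_1*\Gamma_2)=D(\Gamma_1*\Gamma_2)+A(\Gamma_1*\Gamma_2)$ in $2\times 2$ block form under the partition $V(\Gamma_1)\cup W_1\cup\cdots\cup W_{n_2}$. The degree identities $d_{\Gamma_1*\Gamma_2}(u_i)=(n_2+1)r_1$ and $d_{\Gamma_1*\Gamma_2}(v^i_j)=d_{\Gamma_2}(v_j)+r_1$ (the first invoking $r_1$-regularity of $\Gamma_1$) give $D(\Gamma_1*\Gamma_2)=\mathrm{diag}\bigl((n_2+1)r_1 I_{n_1},\,(D(\Gamma_2)+r_1 I_{n_2})\otimes I_{n_1}\bigr)$. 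Reusing the adjacency block structure from the proof of Theorem \ref{Thm1} and absorbing the degree part, the signless Laplacian acquires the shape
\[
Q(\Gamma_1*\Gamma_2)=\begin{bmatrix}Q(\Gamma_1)+n_2r_1 I_{n_1} & \mu(\Gamma_2)^T\otimes A(\Gamma_1)\phi(\Gamma_1) \\ \mu(\Gamma_2)\otimes \phi(\Gamma_1)A(\Gamma_1) & (Q(\Gamma_2)+r_1 I_{n_2})\otimes I_{n_1}\end{bmatrix},
\]
where the diagonal shifts $n_2 r_1$ on the top-left and $r_1$ on the bottom-right are the telltale marks of the signless Laplacian case.

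Next I would apply Lemma \ref{lemma 0} using the $(2,2)$-block as pivot. By Kronecker property (v), its determinant is $\bigl(\det((x-r_1)I_{n_2}-Q(\Gamma_2))\bigr)^{n_1}=\bigl(f(Q(\Gamma_2);x-r_1)\bigr)^{n_1}$, which is already the first factor of the claim. The Schur complement then simplifies exactly as in Theorem \ref{Thm1}: the mixed-product property collapses the correction into a tensor of a scalar with a matrix, $\phi(\Gamma_1)^2=I_{n_1}$ eliminates the markings, and (\ref{eqn1}) identifies the scalar as $\chi_{Q(\Gamma_2)}(x-r_1)$, producing
\[
S=(x-n_2r_1)I_{n_1}-Q(\Gamma_1)-\chi_{Q(\Gamma_2)}(x-r_1)\,A(\Gamma_1)^2.
\]

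The decisive step, and the reason $\Gamma_1$ must be regular, is factoring $\det(S)$. Because $\Gamma_1$ is $r_1$-regular we have $A(\Gamma_1)=Q(\Gamma_1)-r_1 I_{n_1}$, so $A(\Gamma_1)^2$ is a polynomial in $Q(\Gamma_1)$; the two matrices share an eigenbasis, on which $A(\Gamma_1)^2$ acts by $(\gamma_i(\Gamma_1)-r_1)^2$. Hence $S$ is a polynomial in $Q(\Gamma_1)$ and
\[
\det(S)=\prod_{i=1}^{n_1}\bigl(x-n_2r_1-\gamma_i(\Gamma_1)-\chi_{Q(\Gamma_2)}(x-r_1)(\gamma_i(\Gamma_1)-r_1)^2\bigr).
\]
Without regularity of $\Gamma_1$, the matrices $Q(\Gamma_1)$ and $A(\Gamma_1)^2$ need not commute, and the determinant would not split over eigenvalues of $Q(\Gamma_1)$ in this clean form; this is the main obstacle that the regularity hypothesis overcomes. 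Multiplying the two determinants then yields the stated formula.
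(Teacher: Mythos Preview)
Your proposal is correct and follows essentially the same route as the paper: block form of $Q(\Gamma_1*\Gamma_2)$ under the partition (\ref{eqn2}), Schur complement via Lemma~\ref{lemma 0} with the $(2,2)$-block as pivot, Kronecker simplifications with $\phi(\Gamma_1)^2=I_{n_1}$ to reach $S=(x-n_2r_1)I_{n_1}-Q(\Gamma_1)-\chi_{Q(\Gamma_2)}(x-r_1)A(\Gamma_1)^2$, and then the regular-graph identity $A(\Gamma_1)=Q(\Gamma_1)-r_1I_{n_1}$ to factor $\det(S)$ over the $Q$-eigenvalues. Your explicit remark that regularity is what forces $Q(\Gamma_1)$ and $A(\Gamma_1)^2$ to commute is a slightly cleaner justification of the eigenvalue step than the paper's phrasing, but the argument is otherwise identical.
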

\begin{proof}
    $0_{n_1\times n_1}$ is the all $0$ $n_1\times n_1$ matrix. With respect to the partition mentioned in \ref{eqn2}, the nodes degree diagonal matrix and signless Laplacian matrix of $\Gamma_1*\Gamma_2$ can be written as,
    \[D(\Gamma_1*\Gamma_2)=\begin{bmatrix}
        (n_2+1)D(\Gamma_1)& \mu(\Gamma_2)^T\otimes 0_{n_1\times n_1}\\
        \mu(\Gamma_2)\otimes 0_{n_1\times n_1}& D(\Gamma_2)\otimes I_{n_1}+I_{n_2}\otimes D(\Gamma_1)
    \end{bmatrix}\]
    and
    \begin{equation*}
    \begin{split}
         Q(\Gamma_1*\Gamma_2)&=D(\Gamma_1*\Gamma_2)+A(\Gamma_1*\Gamma_2)\\
        & =\begin{bmatrix}
                n_2D(\Gamma_1)+Q(\Gamma_1)& \mu(\Gamma_2)^T\otimes A(\Gamma_1)\phi(\Gamma_1)\\
                \mu(\Gamma_2)\otimes \phi(\Gamma_1)A(\Gamma_1)& Q(\Gamma_2)\otimes I_{n_1}+I_{n_2}\otimes D(\Gamma_1)
          \end{bmatrix}
    \end{split}
    \end{equation*}
    Since $\Gamma_1$ is $r_1$- regular, so $D(\Gamma_1)=r_1I_{n_1}.$\\
    Now, using Lemma \ref{lemma 0}
    \begin{equation*}
        \begin{split}
            f(Q(\Gamma_1*\Gamma_2);x)&=\text{det}\begin{bmatrix}
               (x-n_2r_1)I_{n_1}-Q(\Gamma_1)& -\mu(\Gamma_2)^T\otimes A(\Gamma_1)\phi(\Gamma_1)\\ -\mu(\Gamma_2)\otimes \phi(\Gamma_1)A(\Gamma_1)& (xI_{n_1n_2}-Q(\Gamma_2)\otimes I_{n_1}-I_{n_2}\otimes r_1I_{n_1})
               \end{bmatrix}\\
            &=\text{det}\begin{bmatrix}
               (x-n_2r_1)I_{n_1}-Q(\Gamma_1)& -\mu(\Gamma_2)^T\otimes A(\Gamma_1)\phi(\Gamma_1)\\ -\mu(\Gamma_2)\otimes \phi(\Gamma_1)A(\Gamma_1)& \big((x-r_1)I_{n_2}-Q(\Gamma_2)\big)\otimes I_{n_1}
            \end{bmatrix}\\
            &=\text{det}\bigg\{\big((x-r_1)I_{n_2}-Q(\Gamma_2)\big)\otimes I_{n_1}\bigg\} \cdot \text{det}(S)\\
        \end{split}
    \end{equation*}
    here \begin{equation*}
        \begin{split}
            S&=\big((x-n_2r_1)I_{n_1}-Q(\Gamma_1)\big)-\bigg[\big(\mu(\Gamma_2)^T\otimes A(\Gamma_1)\phi(\Gamma_1)\big)\bigg(\big((x-r_1)I_{n_2}-Q(\Gamma_2)\big)\otimes I_{n_1}\bigg)^{-1}\\ &~~~~\big(\mu(\Gamma_2)\otimes \phi(\Gamma_1)A(\Gamma_1)\big)\bigg]\\
            &=\big((x-n_2r_1)I_{n_1}-Q(\Gamma_1)\big)-\bigg[\big(\mu(\Gamma_2)^T\otimes A(\Gamma_1)\phi(\Gamma_1)\big)\bigg(\big((x-r_1)I_{n_2}-Q(\Gamma_2)\big)^{-1}\otimes I_{n_1}^{-1}\bigg)\\ &~~~~\big(\mu(\Gamma_2)\otimes \phi(\Gamma_1)A(\Gamma_1)\big)\bigg]\\
            &=\big((x-n_2r_1)I_{n_1}-Q(\Gamma_1)\big)-\big\{\mu(\Gamma_2)^T\big((x-r_1)I_{n_1}-Q(\Gamma_2)\big)^{-1}\mu(\Gamma_2)\big\}\cdot\big\{A(\Gamma_1)\phi(\Gamma_1)I_{n_1}\phi(\Gamma_1)(\Gamma_1)\big\}\\
            &=\big((x-n_2r_1)I_{n_1}-Q(\Gamma_1)\big)-\chi_{\Gamma+2}(x-r_1)A(\Gamma_1)^2
        \end{split}
    \end{equation*}
    Therefore,\begin{equation*}
        \begin{split}
            \text{det}(S)&=\text{det}\big\{\big((x-n_2r_1)I_{n_1}-Q(\Gamma_1)\big)-\chi_{Q\Gamma_2)}(x-r_1)A(\Gamma_1)^2\big\}\\
            &=\prod_{i=1}^{n_1}\big\{\big((x-n_2r_1)-\gamma_i(\Gamma_1)\big)-\chi_{Q(\Gamma_2)}(x-r_1)\big(\gamma_i(\Gamma_1)-r_1\big)^2\big\}
        \end{split}
    \end{equation*}
    Here we use the fact that $\lambda$ is an eigenvalue of a matrix $A(G)$ with eigenvector $v$, iff $\lambda+r_1$ is an eigenvalue of $Q(G)$ with the same eigenvector $v$.
    Also, \[\text{det}\big\{\big((x-r_1)I_{n_2}-Q(\Gamma_2)\big) \otimes I_{n_1}\big\}=\text{det}\big((x-r_1)I_{n_2}-Q(\Gamma_2)\big)^{n_1}\otimes ~\text{det}(I_{n_1})^{n_2}=\big(f(Q(\Gamma_2);x-r_1)\big)^{n_1}\]
    Hence,
    \[f\big(Q(\Gamma_1*\Gamma_2);x\big)=\big(f\big(Q(\Gamma_2);x-r_1\big)\big)^{n_1}\cdot \prod_{i=1}^{n_1}\bigg(x-n_2r_1-\gamma_i(\Gamma_1)-\chi_{Q(\Gamma_2)}(x-r_1)(\gamma_i(\Gamma_1)-r_1)^2\bigg).\]
\end{proof}

\begin{prop}\label{Prop 3}
    Let $\Gamma_1=(G_1,\sigma_1,\mu_1)$ be a $r_1$-regular signed graph on $n_1$ nodes and $\Gamma_2=(G_2,\sigma_2,\mu_2)$ be $(r_2,k_2)$ co-regular graph on $n_2$ nodes. Suppose that the spectrum of $Q(\Gamma_1)$ and $Q(\Gamma_2)$ are $(\alpha_1,\alpha_2,\cdots,\alpha_{n_1})$ and $(\beta_1,\beta_2,\cdots,\beta_{n_2})$ respectively and multiplicity of eigenvalue $(r_2+k_2)$ of $Q(\Gamma_2)$ is $p$ then the spectrum of $Q(\Gamma_1*\Gamma_2)$ is given by
    \begin{enumerate}[label=(\roman*)]
        \item $2n_1$ eigenvalues $\frac{(r_1+r_2+k_2+r_1n_2+\alpha_i)\pm\sqrt{(r_1+r_2+k_2-r_1n_2-\alpha_i)^2+4n_2(\alpha_i-r_1)^2}}{2}$ for each eigenvalue $\alpha_i(i=1,2,\cdots,n_1)$ of $\Gamma_1.$
        \item $n_1(n_2-p)$ eigenvalues $\beta_i+r_1$ each with multiplicity $n_1$ for very eigenvalue $\beta_i(\neq r_2+k_2)$ of $Q(\Gamma_2)$.
        \item Eigenvalue $2r_2+k_2$ with multiplicity $n_1(p-1).$
    \end{enumerate}
\end{prop}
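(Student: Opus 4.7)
The plan is to specialise Theorem \ref{Thm2} by substituting the explicit coronal of the co-regular graph $\Gamma_2$ supplied by Lemma \ref{Lemma 3}, in direct parallel with the way Proposition \ref{Prop1} specialises Theorem \ref{Thm1}. The calculation breaks naturally into three parts corresponding to items (i)--(iii) of the statement.

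First, Lemma \ref{Lemma 3} gives $\chi_{Q(\Gamma_2)}(\lambda)=\frac{n_2}{\lambda-r_2-k_2}$, whose unique pole $\lambda=r_2+k_2$ coincides with an eigenvalue of $Q(\Gamma_2)$ of multiplicity $p$. Substituting the shifted form $\chi_{Q(\Gamma_2)}(x-r_1)=\frac{n_2}{x-r_1-r_2-k_2}$ into the product in Theorem \ref{Thm2} and clearing the denominator, the $i$-th factor reduces to the quadratic
\[(x-r_1-r_2-k_2)(x-n_2r_1-\alpha_i)-n_2(\alpha_i-r_1)^2 = 0.\]
Applying the quadratic formula together with the algebraic identity $(A+B)^2-4AB=(A-B)^2$, with $A=r_1+r_2+k_2$ and $B=n_2r_1+\alpha_i$, yields the two roots in (i) and produces $2n_1$ eigenvalues in total.

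Next, the prefactor $\big(f(Q(\Gamma_2);x-r_1)\big)^{n_1}$ contributes every eigenvalue of the shifted matrix, namely each $\beta_j+r_1$, with multiplicity $n_1$. Restricting to those $\beta_j\neq r_2+k_2$ yields the $n_1(n_2-p)$ eigenvalues claimed in item (ii).

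The main obstacle, and the only step requiring genuine care, is the multiplicity bookkeeping for item (iii). The pole introduced by each of the $n_1$ copies of $\chi_{Q(\Gamma_2)}(x-r_1)$ must be offset against the zeros supplied by the prefactor. Since $r_2+k_2$ is a root of $f(Q(\Gamma_2);x-r_1)$ of order $p$ at the shifted value, the prefactor contributes a zero of order $n_1 p$ there, while clearing the $n_1$ denominators in the first step absorbs $n_1$ copies of the factor $(x-r_1-r_2-k_2)$. The net remaining multiplicity is therefore $n_1p-n_1=n_1(p-1)$, giving item (iii). A final consistency check $2n_1+n_1(n_2-p)+n_1(p-1)=n_1(n_2+1)$ confirms that all eigenvalues of the $n_1(n_2+1)$-dimensional matrix $Q(\Gamma_1*\Gamma_2)$ have been accounted for.
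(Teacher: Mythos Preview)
Your proposal is correct and follows essentially the same route as the paper: invoke Lemma \ref{Lemma 3} to obtain $\chi_{Q(\Gamma_2)}(x-r_1)=\frac{n_2}{x-r_1-r_2-k_2}$, substitute into Theorem \ref{Thm2}, solve the resulting quadratic for item (i), read off the shifted eigenvalues $\beta_j+r_1$ from the prefactor for item (ii), and attribute the residual multiplicity at the pole to item (iii). Your multiplicity bookkeeping for (iii) is in fact more explicit than the paper's, which simply asserts that the ``remaining $n_1(p-1)$ eigenvalues must equal the only pole'' without spelling out the cancellation between the $n_1p$ zeros from the prefactor and the $n_1$ poles cleared from the product.
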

\begin{proof}
    By Lemma \ref{Lemma 3}, \[\chi_{Q(\Gamma_2)}(\lambda-r_1)=\frac{n_2}{\lambda-r_1-r_2-k_2}\]
    The only pole of $\chi_{Q(\Gamma_2)}(\lambda-r_1)$ is $\lambda=r_1-r_2-k_2$ which is equivalent to the eigenvalue $\lambda-r_1=r_2+k_2$ of $Q(\Gamma_2).$ By Theorem \ref{Thm2} the spectrum of $Q(\Gamma_1*\Gamma_2)$ is given by
    \begin{enumerate}
        \item The $2n_1$, eigenvalues obtained by solving
        \begin{equation*}
            \begin{split}
                &x-n_2r_1-\alpha_i-\frac{n_2}{x-r_1-r_2-k_2}(\alpha_i+r_1)^2=0\\
                \implies & x^2-r_1x-r_2x-k_2x-n_2r_1x+n_2r_1^2+r_1r_2n_2+n_2r_1k_2-\alpha_ix+r_1\alpha_i+r_2\alpha_i+k_2\alpha_i\\ &-n_2(\alpha_i-r_1)^2=0\\
                \implies & x^2-(r_1+r_2+k_2+n_2r_1+\alpha_i)x+r_1^2n_2+r_1r_2n_2+r_1n_2k_2+r_1\alpha_i+r_2\alpha_i+k_2\alpha_i-\\ &n_2(\alpha_i-r_1)^2=0\\
                \implies & x=\frac{(r_1+r_2+k_2+n_2r_1+\alpha_i)\pm\sqrt{(r_1+r_2+k_2-r_1n_2-\alpha_i)^2+4n_2(\alpha_i-r_1)^2}}{2}
            \end{split}
        \end{equation*}
        for each eigenvalue $\alpha_i(i=1,2,\cdots,n_1)$ of $Q(\Gamma_1)$.
        \item The other $n_1(n_2-p)$ eigenvalues are $\beta_i+r_1$ each with multiplicity $n_1$ for every eigenvalue $\beta_i(\neq r_2+k_2)$ of $\Gamma_2.$
        \item The remaining $n_1(p-1)$ eigenvalues must equal the only pole $\lambda=2r_2+k_2$ of $\chi_{Q(\Gamma_2)}(\lambda-r_1).$
    \end{enumerate}
\end{proof}

\begin{prop}\label{Prop 4}
    Let $\Gamma_1=(G_1,\sigma_1,\mu_1)$ be any signed graph on $n_1$ nodes and $\Gamma_2=(K_{1,n},\sigma_2,\mu_2)$ be a signed star on $n+1$ nodes with $V(\Gamma_2)=\{v_1,v_2,\cdots,v_{n+1}\}$ where $d(v_1)=n$. Suppose that the spectrum of $Q(\Gamma_1)$ is $\{\alpha_1,\alpha_2,\cdots,\alpha_{n_1}\}$ then the spectrum of $Q(\Gamma_1*\Gamma_2)$ is given by
    \begin{enumerate}[label=(\roman*)]
        \item The eigenvalue $1+r_1$ with multiplicity $n_1(n-1).$
        \item $3n_1$ eigenvalues obtained from the roots of the equation $x^3-\{r_1(n+2)+(n+1)+\alpha_i\}x^2+\{r_1(2n+1)(r_1+1)+n^2r_1+(2r_1+n+1)\alpha_i-(n+1)(\alpha_i-r_1)^2\}x-nr_1^3-(n+1)nr_1^2-r_1^2\alpha_i-(n+1)r_1\alpha_i+\{r_1(n+1)+(n^2+1)-2n\mu_2(v_1)\}(\alpha_i-r_1)^2=0$ for each eigenvalue $\alpha_i(i=1,2,\cdots,n_1)$ of $Q(\Gamma_1).$
    \end{enumerate}
    
\end{prop}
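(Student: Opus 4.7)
The plan is to specialize Theorem \ref{Thm2} to the present setting, using Lemma \ref{lemma 4} for the signed $Q$-coronal of the star. First I would determine the $Q$-spectrum of $\Gamma_2$ itself. Every signed tree is balanced, hence switching-equivalent to its all-positive counterpart; because $Q(\Gamma)=D(\Gamma)+A(\Gamma)$ is conjugated by a $\pm 1$-diagonal matrix under switching, its spectrum is a switching invariant. Thus $S(Q(\Gamma_2))=\{0,\,1^{(n-1)},\,n+1\}$ and
\[ f\bigl(Q(\Gamma_2);x-r_1\bigr) = (x-r_1)\,(x-r_1-1)^{n-1}\,\bigl(x-r_1-(n+1)\bigr). \]

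Substituting this, together with the coronal from Lemma \ref{lemma 4},
\[ \chi_{Q(\Gamma_2)}(x-r_1) = \frac{(n+1)(x-r_1)-(n^2+1)+2n\mu_2(v_1)}{(x-r_1)\bigl(x-r_1-(n+1)\bigr)}, \]
into Theorem \ref{Thm2}, I would clear the denominator inside each of the $n_1$ factors of the product. Each factor then becomes a cubic in $x$ divided by $(x-r_1)\bigl(x-r_1-(n+1)\bigr)$, and the $n_1$ copies of this denominator are cancelled exactly by the corresponding simple roots of $\bigl(f(Q(\Gamma_2);x-r_1)\bigr)^{n_1}$. What remains is
\[ f\bigl(Q(\Gamma_1*\Gamma_2);x\bigr) = (x-r_1-1)^{n_1(n-1)}\,\prod_{i=1}^{n_1} P_i(x), \]
where
\[ P_i(x) = \bigl(x - n_2 r_1 - \alpha_i\bigr)(x-r_1)\bigl(x-r_1-(n+1)\bigr) - \bigl[(n+1)(x-r_1)-(n^2+1)+2n\mu_2(v_1)\bigr](\alpha_i-r_1)^2. \]

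The surviving factor $(x-r_1-1)^{n_1(n-1)}$ contributes the eigenvalue $1+r_1$ with multiplicity $n_1(n-1)$, giving (i). For (ii), I would expand each $P_i(x)$ and collect the coefficients of $x^3,\,x^2,\,x,\,1$ to recover the cubic stated in the proposition; its $3n_1$ roots (as $i$ ranges over $1,\dots,n_1$) account for the rest of the spectrum. The main step requiring care is this bookkeeping: combining the degree-$3$ product with the degree-$1$ correction scaled by $(\alpha_i-r_1)^2$ produces several $r_1$-dependent cross terms, but the manipulation is purely mechanical and mirrors the expansion used in Proposition \ref{Prop 3}.
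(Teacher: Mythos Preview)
Your proposal is correct and follows essentially the same approach as the paper: invoke Theorem~\ref{Thm2}, plug in the coronal from Lemma~\ref{lemma 4}, read off the eigenvalue $1+r_1$ from the non-pole $Q$-eigenvalues of $\Gamma_2$, and clear the denominator to obtain the cubic for each $\alpha_i$. Your switching argument for $S(Q(\Gamma_2))=\{0,1^{(n-1)},n+1\}$ is a welcome addition that the paper simply asserts; otherwise the two arguments are identical.
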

\begin{proof}
    The spectrum of $Q(\Gamma_2)$ is $(0,1^{n-1},n+1)$. By Lemma \ref{lemma 4}, \[\chi_{Q(\Gamma_2)}(x-r_1)=\frac{(n+1)(x-r_1)-(n^2+1)+2n\mu(v_1)}{(x-r_1)\big(x-r_1-(n+1)\big)}\]
    Two poles of $\chi_{Q(\Gamma_2)}(x-r_1)$ are $\lambda=r_1$ and $\lambda=r_1+n+2$ which is equivalent to maximum eigenvalue $\lambda-r_1=n+1$ and minimum eigenvalue $\lambda-r_1=0$ of $Q(\Gamma_2)$. By Theorem \ref{Thm2}, the spectrum of $Q(\Gamma_1*\Gamma_2)$ is given by:
    \begin{enumerate}[label=(\roman*)]
        \item The eigenvalue $1+r_1$ with multiplicity $n_1(n-1).$
        \item The other $3n_1$ eigenvalues are obtained by solving
        \begin{equation*}
            \begin{split}
            &x-nr_1-\alpha_i-\frac{(n+1)(x-r_1)-(n^2+1)+2n\mu_2(v_1)}{(x-r_1)\big(x-r_1-(n+1)\big)}(\alpha_i-r_1)^2=0\\
            \implies& x^3-\{r_1(n+2)+(n+1)+\alpha_i\}x^2+\{r_1(2n+1)(r_1+1)+n^2r_1+(2r_1+n+1)\alpha_i-\\ &(n+1)(\alpha_i-r_1)^2\}x-nr_1^3-(n+1)nr_1^2-r_1^2\alpha_i-(n+1)r_1\alpha_i+\{r_1(n+1)+(n^2+1)\\ &-2n\mu_2(v_1)\}(\alpha_i-r_1)^2=0.
            \end{split}
        \end{equation*}
        for each eigenvalue $\alpha_i(i=1,2,\cdots,n_1)$ of $Q(\Gamma_1).$
    \end{enumerate}
\end{proof}

\noindent Using Theorem \ref{Thm2}  we can construct many pairs of $Q$-co-spectral signed graphs, as stated in the following corollary.

    \begin{corollary}
       Let $\Gamma_1$ and $\Gamma_2$ be $Q$-co-spectral  signed graphs, and $\Gamma$ an arbitrary signed graph. Then
       \begin{enumerate}[label=(\roman*)]
           \item $\Gamma_1*\Gamma$ and $\Gamma_2*\Gamma$ are $Q$-co-spectral.
           \item $\Gamma*\Gamma_1$ and $\Gamma*\Gamma_2$ are $Q$-co-spectral if $\chi_{Q(\Gamma_1)}(x)=\chi_{Q(\Gamma_2)}(x)$
       \end{enumerate}
    \end{corollary}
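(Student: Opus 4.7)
The plan is to apply Theorem \ref{Thm2} directly to both sides of each claimed co-spectrality, and then observe that every ingredient appearing in the resulting factorization of the signless Laplacian characteristic polynomial is common to $\Gamma_1$ and $\Gamma_2$ under the stated hypotheses. No new matrix manipulation is required; the work is a bookkeeping check of what data the formula of Theorem \ref{Thm2} actually depends on.

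For part (i), in order to invoke Theorem \ref{Thm2} with $\Gamma_j$ in the left slot we take $\Gamma_1$ and $\Gamma_2$ to be regular; since they are $Q$-co-spectral, they must share the same order $n_1$, the same regularity $r_1$ (read off, for example, from $\mathrm{tr}\,Q = n_1 r_1$), and the full multiset $\{\gamma_1,\dots,\gamma_{n_1}\}$ of $Q$-eigenvalues. Writing $n$ for the order of $\Gamma$, Theorem \ref{Thm2} gives
\[
f(Q(\Gamma_j*\Gamma);x) = \bigl(f(Q(\Gamma);x-r_1)\bigr)^{n_1} \cdot \prod_{i=1}^{n_1}\bigl(x - nr_1 - \gamma_i(\Gamma_j) - \chi_{Q(\Gamma)}(x-r_1)(\gamma_i(\Gamma_j)-r_1)^2\bigr).
\]
The first factor is manifestly independent of $j$, and the product factor depends on $\Gamma_j$ only through its $Q$-spectrum, which is the same for $j=1,2$. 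Hence the two polynomials coincide.

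For part (ii), $\Gamma$ now sits in the left slot, so I would assume $\Gamma$ is $r$-regular; Theorem \ref{Thm2} yields
\[
f(Q(\Gamma*\Gamma_j);x) = \bigl(f(Q(\Gamma_j);x-r)\bigr)^{n} \cdot \prod_{i=1}^{n}\bigl(x - n_j r - \gamma_i(\Gamma) - \chi_{Q(\Gamma_j)}(x-r)(\gamma_i(\Gamma)-r)^2\bigr),
\]
where $n$ is the order of $\Gamma$ and $n_j$ that of $\Gamma_j$. Co-spectrality forces $n_1 = n_2$ and $f(Q(\Gamma_1);y) = f(Q(\Gamma_2);y)$, disposing of the first factor. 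Inside the product, $\gamma_i(\Gamma)$ is fixed, so the only remaining $j$-dependent piece is the signed $Q$-coronal $\chi_{Q(\Gamma_j)}(x-r)$; by the supplementary hypothesis $\chi_{Q(\Gamma_1)}(x) = \chi_{Q(\Gamma_2)}(x)$ this too is common to $j=1,2$.

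There is no genuine obstacle in this argument. The only conceptual point worth isolating is why the coronal hypothesis is redundant in part (i) but indispensable in part (ii): in (i) the coronal factor is evaluated on the fixed graph $\Gamma$, whereas in (ii) it is evaluated on the variable graph $\Gamma_j$, and $Q$-co-spectrality alone does not fix the signed coronal (which also feels the marking and the eigenvectors, not just the spectrum). This is precisely the gap that the extra hypothesis in (ii) is designed to close.
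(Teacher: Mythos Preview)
Your proof is correct and matches the paper's intent: the paper states this corollary without proof, simply flagging it as a direct consequence of Theorem~\ref{Thm2}, and your argument carries out exactly that deduction by reading off which data the formula depends on. Your explicit isolation of the regularity hypothesis needed on the left-slot graph (which the corollary as stated leaves implicit) and your justification that $Q$-co-spectral regular graphs must share the same order and degree are welcome pieces of hygiene that the paper omits.
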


\subsubsection{L-spectra of signed graph}

    \begin{lemma} \label{Lemma 5}
        Let $\Gamma=(G,\sigma,\mu)$ be a co-regular signed graph of order $n$ with co-regularity pair $(r,k)$ then the signed $L(\Gamma)$-coronal of $\Gamma$ is \[\chi_{L(\Gamma)}(\lambda)=\frac{n}{\lambda-r+k}\] 
    \end{lemma}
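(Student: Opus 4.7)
The proof will mirror the pattern already used for Lemma \ref{Lemma1} (adjacency case) and Lemma \ref{Lemma 3} (signless Laplacian case), with the only difference being the correct eigenvalue of $L(\Gamma)$ on $\mu(\Gamma)$. The plan is to show that $\mu(\Gamma)$ is an eigenvector of $L(\Gamma)$ with eigenvalue $r-k$, after which the coronal formula follows directly from the definition.

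First I would record the structural consequences of $(r,k)$-co-regularity. Since the underlying graph $G$ is $r$-regular, $D(\Gamma)=rI_n$. Since $\Gamma$ is net-regular with net degree $k$, the argument already invoked in Lemma \ref{Lemma1} gives $A(\Gamma)\mu(\Gamma)=k\mu(\Gamma)$. Combining these two facts:
\begin{equation*}
L(\Gamma)\mu(\Gamma) = D(\Gamma)\mu(\Gamma) - A(\Gamma)\mu(\Gamma) = r\mu(\Gamma) - k\mu(\Gamma) = (r-k)\mu(\Gamma).
\end{equation*}

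Next I would transfer this eigen-relation to the resolvent. From $L(\Gamma)\mu(\Gamma)=(r-k)\mu(\Gamma)$ we get $(\lambda I_n - L(\Gamma))\mu(\Gamma) = (\lambda-r+k)\mu(\Gamma)$, and since $\lambda$ is viewed as an indeterminate the factor $(\lambda-r+k)$ is invertible in $\mathbb{C}(\lambda)$, giving $(\lambda I_n - L(\Gamma))^{-1}\mu(\Gamma) = \tfrac{1}{\lambda-r+k}\mu(\Gamma)$. Substituting into the defining formula (\ref{eqn1}) yields
\begin{equation*}
\chi_{L(\Gamma)}(\lambda) = \mu(\Gamma)^T(\lambda I_n - L(\Gamma))^{-1}\mu(\Gamma) = \frac{\mu(\Gamma)^T\mu(\Gamma)}{\lambda-r+k} = \frac{n}{\lambda-r+k},
\end{equation*}
where the last equality uses $\mu(\Gamma)^T\mu(\Gamma)=n$ because each entry of $\mu(\Gamma)$ is $\pm 1$ (or, in the degenerate zero-marking case, one handles it exactly as in Lemma \ref{Lemma1}).

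There is essentially no obstacle here: the only subtle point is the identity $A(\Gamma)\mu(\Gamma)=k\mu(\Gamma)$ for a net-regular signed graph, but this is precisely the fact the paper already takes for granted in Lemmas \ref{Lemma1} and \ref{Lemma 3}, so invoking it is consistent with the surrounding exposition. The proof will therefore be short and formally identical in shape to Lemma \ref{Lemma 3}, with $(r+k)$ replaced throughout by $(r-k)$.
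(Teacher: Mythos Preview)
Your proof is correct and matches the paper's approach exactly: the paper simply states that the proof is similar to that of Lemma~\ref{Lemma 3}, which amounts precisely to the eigenvector computation $L(\Gamma)\mu(\Gamma)=(r-k)\mu(\Gamma)$ and the resolvent substitution you carried out. No changes are needed.
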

    \begin{proof}
        The proof is similar to that of Lemma \ref{Lemma 3}
    \end{proof}

    \begin{lemma}\label{lemma 6}
        Let $\Gamma=(K_{1,n},\sigma,\mu)$ be a signed star with $V(\Gamma)=\{v_1,v_2,\cdots,v_{n+1}\}$ such that $d(v_1)=n$, then  \[
 \chi_{L(\Gamma)}(\lambda)=\frac{(n+1)\lambda-(n^2+1)+2n\mu(v_1)}{\lambda(\lambda-(n+1))}
 \]
        
    \end{lemma}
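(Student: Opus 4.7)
The plan is to adapt the argument of Lemma~\ref{lemma 4} to the signed Laplacian $L(\Gamma) = D(\Gamma) - A(\Gamma)$. Under the canonical marking on the star $K_{1,n}$ with center $v_1$, each leaf $v_j$ is incident to a single edge, so $\mu(v_j) = \sigma(v_1 v_j)$ for $j \geq 2$. Consequently $L(\Gamma)$ has diagonal $(n, 1, 1, \ldots, 1)$ and the only nonzero off-diagonal entries are $L_{1,j} = L_{j,1} = -\mu(v_j)$ for $j \geq 2$; this is the input on which every later calculation rests.

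The first step I would take is to construct a diagonal matrix $X = \operatorname{diag}(d_1, d, d, \ldots, d)$ of order $n+1$, analogous to the one used in Lemma~\ref{lemma 4}, and seek constants $d_1, d$ and a scalar polynomial $p(\lambda)$ such that
\[
    (\lambda I_{n+1} - L(\Gamma))\, X\, \mu(\Gamma) = p(\lambda)\, \mu(\Gamma).
\]
Writing out the first coordinate and an arbitrary leaf coordinate of this vector identity produces two scalar equations in $d_1$, $d$ and $p(\lambda)$; applying $\mu(v_1)^2 = 1$ and eliminating $p(\lambda)$ should determine both entries as affine functions of $\lambda$ and $\mu(v_1)$ and force $p(\lambda) = \lambda(\lambda - (n+1))$, which is precisely the factor corresponding to the two Laplacian eigenvalues of the underlying unsigned star on the span of $\mu(\Gamma)$.

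With $X$ thus identified, the coronal follows immediately from
\[
    \chi_{L(\Gamma)}(\lambda) = \mu(\Gamma)^T (\lambda I_{n+1} - L(\Gamma))^{-1} \mu(\Gamma) = \frac{\mu(\Gamma)^T X \mu(\Gamma)}{p(\lambda)} = \frac{d_1 + n d}{\lambda(\lambda - (n+1))},
\]
where the last equality uses $\mu(v_j)^2 = 1$ for every $j$. Substituting the explicit forms of $d_1$ and $d$ and collecting terms should produce the numerator $(n+1)\lambda - (n^2 + 1) + 2n\mu(v_1)$, completing the derivation.

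The main bookkeeping obstacle is sign tracking: since $L(\Gamma)$ carries $-A(\Gamma)$ in place of $+A(\Gamma)$, the two scalar equations governing $d_1$ and $d$ differ in sign pattern from those in Lemma~\ref{lemma 4}, so the solution has to be rederived carefully rather than copied. I would verify the final expression on a small case—say $n = 1$, both signs of $\mu(v_1)$—to guard against a sign slip in the $\mu(v_1)$-dependent term before recording the general formula.
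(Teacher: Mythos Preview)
Your proposal is correct and follows essentially the same route as the paper: write down $L(\Gamma)$, posit a diagonal matrix $X=\operatorname{diag}(d_1,d,\ldots,d)$ with $(\lambda I_{n+1}-L(\Gamma))X\mu(\Gamma)=p(\lambda)\mu(\Gamma)$, and read off $\chi_{L(\Gamma)}(\lambda)=\mu(\Gamma)^T X\mu(\Gamma)/p(\lambda)$. The paper simply records the explicit entries $d_1=\frac{(\lambda-1)\mu(v_1)-n}{\mu(v_1)}$, $d=\lambda-n-\mu(v_1)$ and then defers to the computation in Lemma~\ref{lemma 4}, which is exactly the procedure you outline.
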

     \begin{proof}
         We have $L(\Gamma)=
        $\begin{math}
        \begin{bmatrix}
           n &-\mu(v_2) &-\mu(v_3) &\cdots &-\mu(v_{n+1}) \\
           -\mu(v_2)   &1 &0 &\cdots &0 \\
           \vdots &\vdots &\vdots &   &\vdots\\
           -\mu(v_{n+1}) &0  &0 &\cdots &1
         \end{bmatrix}
        \end{math}\\
        Taking $X=diag\Big(\frac{(\lambda-1) \mu(v_1)-n}{\mu(v_1)}, \lambda-n-\mu(v_1), \cdots, \lambda-n-\mu(v_1)\Big)$ be $(n+1)\times(n+1)$ diagonal matrix with first diagonal entry as $\frac{(\lambda-1)\mu(v_1)-n}{\mu(n_1)}$ and  remaining $n$ diagonal entries as $\lambda-n-\mu(v_1).$ And proceeding as on Lemma \ref{lemma 4} we will get the required result.
    \end{proof}
    
\begin{theorem} \label{Thm 3}
     Let $\Gamma_1$ be an $r_1$-regular signed graph on $n_1$ nodes and $\Gamma_2$ be any signed graph on $n_2$ nodes, where $n_1\geq2,n_2\geq1$ and $r_1\geq1.$ Then,
    \[f\big(L(\Gamma_1*\Gamma_2);x\big)=\big(f\big(L(\Gamma_2);x-r_1\big)\big)^{n_1}\cdot \prod_{i=1}^{n_1}\bigg(x-n_2r_1-\nu_i(\Gamma_1)-\chi_{L(\Gamma_2)}(x-r_1)(r_1-\nu_i(\Gamma_1))^2\bigg).\]
\end{theorem}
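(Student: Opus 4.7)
The proof plan mirrors that of Theorem \ref{Thm2}, with the principal adjustment being that $L=D-A$ instead of $D+A$, which flips the signs on the off-diagonal blocks and, more importantly, forces a careful use of the relation $A(\Gamma_1)=r_1 I_{n_1}-L(\Gamma_1)$ available because $\Gamma_1$ is $r_1$-regular.

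First, I would write $L(\Gamma_1*\Gamma_2)=D(\Gamma_1*\Gamma_2)-A(\Gamma_1*\Gamma_2)$ in block form using the vertex partition in \eqref{eqn2}. Using $D(\Gamma_1)=r_1 I_{n_1}$, the block decomposition is
\[
L(\Gamma_1*\Gamma_2)=
\begin{bmatrix}
n_2 r_1 I_{n_1}+L(\Gamma_1) & -\mu(\Gamma_2)^T\otimes A(\Gamma_1)\phi(\Gamma_1)\\[2pt]
-\mu(\Gamma_2)\otimes \phi(\Gamma_1)A(\Gamma_1) & L(\Gamma_2)\otimes I_{n_1}+r_1 I_{n_1 n_2}
\end{bmatrix},
\]
since the top-left block reduces via $(n_2+1)r_1 I_{n_1}-A(\Gamma_1)=n_2 r_1 I_{n_1}+L(\Gamma_1)$ and the bottom-right collects $D(\Gamma_2)\otimes I_{n_1}+I_{n_2}\otimes r_1 I_{n_1}-A(\Gamma_2)\otimes I_{n_1}=(L(\Gamma_2)+r_1 I_{n_2})\otimes I_{n_1}$.

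Next, I would apply the Schur complement (Lemma \ref{lemma 0}) to $xI-L(\Gamma_1*\Gamma_2)$ with the invertible bottom-right block $A_{22}=((x-r_1)I_{n_2}-L(\Gamma_2))\otimes I_{n_1}$. This immediately gives $\det(A_{22})=(f(L(\Gamma_2);x-r_1))^{n_1}$ by property (v) of the Kronecker product. The Schur complement is
\[
S=\big((x-n_2 r_1)I_{n_1}-L(\Gamma_1)\big)-\big(\mu(\Gamma_2)^T\otimes A(\Gamma_1)\phi(\Gamma_1)\big)A_{22}^{-1}\big(\mu(\Gamma_2)\otimes \phi(\Gamma_1)A(\Gamma_1)\big),
\]
and using $(A\otimes B)(C\otimes D)=AC\otimes BD$ together with $\phi(\Gamma_1)^2=I_{n_1}$, the correction term collapses to $\chi_{L(\Gamma_2)}(x-r_1)\,A(\Gamma_1)^2$.

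The key step is then to diagonalize $S$. Because $\Gamma_1$ is $r_1$-regular, $A(\Gamma_1)$ and $L(\Gamma_1)$ commute and share an eigenbasis; for an eigenvector of $L(\Gamma_1)$ with eigenvalue $\nu_i(\Gamma_1)$, the corresponding eigenvalue of $A(\Gamma_1)$ is $r_1-\nu_i(\Gamma_1)$, so $A(\Gamma_1)^2$ contributes $(r_1-\nu_i(\Gamma_1))^2$. Hence
\[
\det(S)=\prod_{i=1}^{n_1}\!\Big(x-n_2 r_1-\nu_i(\Gamma_1)-\chi_{L(\Gamma_2)}(x-r_1)\big(r_1-\nu_i(\Gamma_1)\big)^2\Big),
\]
and multiplying by $\det(A_{22})$ yields the claimed formula.

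The main obstacle will be bookkeeping of signs: unlike the signless-Laplacian case, every off-diagonal block of $xI-L(\Gamma_1*\Gamma_2)$ has the opposite sign from its counterpart in Theorem \ref{Thm2}, but the two sign flips cancel inside $A_{12}A_{22}^{-1}A_{21}$, so the final $A(\Gamma_1)^2$ term appears with the same sign. Verifying that this cancellation holds and that $(r_1-\nu_i(\Gamma_1))^2=(\nu_i(\Gamma_1)-r_1)^2$ gives the statement exactly as written is the only subtle point; the rest is the same block-determinant computation as in Theorem \ref{Thm2}.
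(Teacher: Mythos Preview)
Your proposal is correct and follows essentially the same approach as the paper: write $L(\Gamma_1*\Gamma_2)$ in the block form induced by the partition \eqref{eqn2}, apply the Schur complement with respect to the $((x-r_1)I_{n_2}-L(\Gamma_2))\otimes I_{n_1}$ block, collapse the correction term to $\chi_{L(\Gamma_2)}(x-r_1)A(\Gamma_1)^2$ via the Kronecker identities and $\phi(\Gamma_1)^2=I_{n_1}$, and then diagonalize using $A(\Gamma_1)=r_1I_{n_1}-L(\Gamma_1)$. Your explicit remark that the two sign flips in the off-diagonal blocks cancel inside $A_{12}A_{22}^{-1}A_{21}$ is exactly the point that distinguishes this computation from Theorem \ref{Thm2}, and your handling of the eigenvalue correspondence $A(\Gamma_1)\leftrightarrow r_1-\nu_i(\Gamma_1)$ is cleaner than the paper's (which carries a few sign typos in the intermediate expressions for $S$).
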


\begin{proof}
    $0_{n_1\times n_1}$ is the all $0$ $n_1\times n_1$ matrix. With respect to the partition mentioned in \ref{eqn2}, the nodes degree diagonal matrix and signed Laplacian matrix of $\Gamma_1*\Gamma_2$ can be written as,
    \[D(\Gamma_1*\Gamma_2)=\begin{bmatrix}
        (n_2+1)D(\Gamma_1)& \mu(\Gamma_2)^T\otimes 0_{n_1\times n_1}\\
        \mu(\Gamma_2)\otimes 0_{n_1\times n_1}& D(\Gamma_2)\otimes I_{n_1}+I_{n_2}\otimes D(\Gamma_1)
    \end{bmatrix}\]
    and
    \begin{equation*}
    \begin{split}
         L(\Gamma_1*\Gamma_2)&=D(\Gamma_1*\Gamma_2)-A(\Gamma_1*\Gamma_2)\\
        & =\begin{bmatrix}
                n_2D(\Gamma_1)+L(\Gamma_1)& -\mu(\Gamma_2)^T\otimes A(\Gamma_1)\phi(\Gamma_1)\\
                -\mu(\Gamma_2)\otimes \phi(\Gamma_1)A(\Gamma_1)& L(\Gamma_2)\otimes I_{n_1}+I_{n_2}\otimes D(\Gamma_1)
          \end{bmatrix}
    \end{split}
    \end{equation*}
    Since $\Gamma_1$ is $r_1$- regular, so $D(\Gamma_1)=r_1I_{n_1}.$\\
    Now, using Lemma \ref{lemma 0}
    \begin{equation*}
        \begin{split}
            f(L(\Gamma_1*\Gamma_2);x)&=\text{det}\begin{bmatrix}
               (x-n_2r_1)I_{n_1}+L(\Gamma_1)& \mu(\Gamma_2)^T\otimes A(\Gamma_1)\phi(\Gamma_1)\\ \mu(\Gamma_2)\otimes \phi(\Gamma_1)A(\Gamma_1)& (xI_{n_1n_2}-L(\Gamma_2)\otimes I_{n_1}-I_{n_2}\otimes r_1I_{n_1})
               \end{bmatrix}\\
            &=\text{det}\begin{bmatrix}
               (x-n_2r_1)I_{n_1}+L(\Gamma_1)& -\mu(\Gamma_2)^T\otimes A(\Gamma_1)\phi(\Gamma_1)\\ -\mu(\Gamma_2)\otimes \phi(\Gamma_1)A(\Gamma_1)& \big((x-r_1)I_{n_2}-L(\Gamma_2)\big)\otimes I_{n_1}
            \end{bmatrix}\\
            &=\text{det}\bigg\{\big((x-r_1)I_{n_2}-L(\Gamma_2)\big)\otimes I_{n_1}\bigg\} \cdot \text{det}(S)\\
        \end{split}
    \end{equation*}
    here \begin{equation*}
        \begin{split}
            S&=\big((x-n_2r_1)I_{n_1}+L(\Gamma_1)\big)-\bigg[\big(\mu(\Gamma_2)^T\otimes A(\Gamma_1)\phi(\Gamma_1)\big)\bigg(\big((x-r_1)I_{n_2}-L(\Gamma_2)\big)\otimes I_{n_1}\bigg)^{-1}\\ &~~~~\big(\mu(\Gamma_2)\otimes \phi(\Gamma_1)A(\Gamma_1)\big)\bigg]\\
            &=\big((x-n_2r_1)I_{n_1}+L(\Gamma_1)\big)-\bigg[\big(\mu(\Gamma_2)^T\otimes A(\Gamma_1)\phi(\Gamma_1)\big)\bigg(\big((x-r_1)I_{n_2}-L(\Gamma_2)\big)^{-1}\otimes I_{n_1}^{-1}\bigg)\\ &~~~~\big(\mu(\Gamma_2)\otimes \phi(\Gamma_1)A(\Gamma_1)\big)\bigg]\\
            &=\big((x-n_2r_1)I_{n_1}+L(\Gamma_1)\big)-\big\{\mu(\Gamma_2)^T\big((x-r_1)I_{n_1}-L(\Gamma_2)\big)^{-1}\mu(\Gamma_2)\big\}\cdot\big\{A(\Gamma_1)\phi(\Gamma_1)I_{n_1}\phi(\Gamma_1)(\Gamma_1)\big\}\\
            &=\big((x-n_2r_1)I_{n_1}+L(\Gamma_1)\big)-\chi_{\Gamma+2}(x-r_1)A(\Gamma_1)^2
        \end{split}
    \end{equation*}
    Therefore,\begin{equation*}
        \begin{split}
            \text{det}(S)&=\text{det}\big\{\big((x-n_2r_1)I_{n_1}+L(\Gamma_1)\big)-\chi_{L(\Gamma_2)}(x-r_1)A(\Gamma_1)^2\big\}\\
            &=\prod_{i=1}^{n_1}\big\{\big((x-n_2r_1)-\nu_i(\Gamma_1)\big)-\chi_{L(\Gamma_2)}(x-r_1)\big(r_1-\nu_i(\Gamma_1)\big)^2\big\}
        \end{split}
    \end{equation*}
    Here we use the fact that $\lambda$ is an eigenvalue of a matrix $A(G)$ with eigenvector $v$, iff $r_1-\lambda$ is an eigenvalue of $L(G)$ with the same eigenvector $v$.
    Also, \[\text{det}\big\{\big((x-r_1)I_{n_2}-L(\Gamma_2)\big) \otimes I_{n_1}\big\}=\text{det}\big((x-r_1)I_{n_2}-L(\Gamma_2)\big)^{n_1}\otimes ~\text{det}(I_{n_1})^{n_2}=\big(f(L(\Gamma_2);x-r_1)\big)^{n_1}\]
    Hence,
    \[f\big(L(\Gamma_1*\Gamma_2);x\big)=\big(f\big(Q(\Gamma_2);x-r_1\big)\big)^{n_1}\cdot \prod_{i=1}^{n_1}\bigg(x-n_2r_1-\nu_i(\Gamma_1)-\chi_{L(\Gamma_2)}(x-r_1)(r_1-\nu_i(\Gamma_1))^2\bigg).\]
\end{proof}

\begin{prop}\label{Prop 5}
    Let $\Gamma_1=(G_1,\sigma_1,\mu_1)$ be a $r_1$-regular signed graph on $n_1$ nodes and $\Gamma_2=(G_2,\sigma_2,\mu_2)$ be $(r_2,k_2)$ co-regular graph on $n_2$ nodes. Suppose that the spectrum of $L(\Gamma_1)$ and $L(\Gamma_2)$ are $(\alpha_1,\alpha_2,\cdots,\alpha_{n_1})$ and $(\beta_1,\beta_2,\cdots,\beta_{n_2})$ respectively and multiplicity of eigenvalue $(r_2+k_2)$ of $L(\Gamma_2)$ is $p$ then the spectrum of $L(\Gamma_1*\Gamma_2)$ is given by
    \begin{enumerate}[label=(\roman*)]
        \item $2n_1$ eigenvalues $\frac{(r_1+r_2-k_2+r_1n_2+\alpha_i)\pm\sqrt{(r_1+r_2-k_2-r_1n_2-\alpha_i)^2+4n_2(r_1-\alpha_i)^2}}{2}$ for each eigenvalue $\alpha_i(i=1,2,\cdots,n_1)$ of $\Gamma_1.$
        \item $n_1(n_2-p)$ eigenvalues $\beta_i+r_1$ each with multiplicity $n_1$ for very eigenvalue $\beta_i(\neq r_2-k_2)$ of $L(\Gamma_2)$.
        \item Eigenvalue $2r_2+k_2$ with multiplicity $n_1(p-1).$
    \end{enumerate}
\end{prop}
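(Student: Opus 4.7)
The plan is to specialize Theorem~\ref{Thm 3} using Lemma~\ref{Lemma 5} and then read off the spectrum, following the same template as the proof of Proposition~\ref{Prop 3} with the $L$-coronal in place of the $Q$-coronal. Substituting
$$\chi_{L(\Gamma_2)}(x-r_1) \;=\; \frac{n_2}{x - r_1 - r_2 + k_2}$$
from Lemma~\ref{Lemma 5} into the characteristic-polynomial identity of Theorem~\ref{Thm 3} gives
$$f(L(\Gamma_1*\Gamma_2);x) \;=\; \bigl(f(L(\Gamma_2);x-r_1)\bigr)^{n_1} \cdot \prod_{i=1}^{n_1}\Bigl(x-n_2 r_1-\alpha_i-\frac{n_2 (r_1-\alpha_i)^2}{x - r_1 - r_2 + k_2}\Bigr).$$
The unique pole of $\chi_{L(\Gamma_2)}(x-r_1)$ sits at $x=r_1+r_2-k_2$ and corresponds to the eigenvalue of $L(\Gamma_2)$ that pairs with the marking vector $\mu(\Gamma_2)$, which carries multiplicity $p$ by hypothesis.

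Next, I would extract the eigenvalues factor by factor. The first factor $\bigl(f(L(\Gamma_2);x-r_1)\bigr)^{n_1}$ contributes each eigenvalue $\beta_j+r_1$ with multiplicity $n_1$; restricting to the $\beta_j$ distinct from the marking-vector eigenvalue yields item~(ii) immediately. For each fixed $i$, clearing the denominator $x-r_1-r_2+k_2$ in the $i$th bracketed factor turns the equation into the quadratic
$$(x-n_2 r_1-\alpha_i)(x-r_1-r_2+k_2)-n_2(r_1-\alpha_i)^2 \;=\; 0,$$
a routine expansion of which produces the two roots displayed in item~(i). Letting $i$ run from $1$ to $n_1$ accounts for the $2n_1$ eigenvalues in~(i).

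Finally, I would carry out the pole-cancellation bookkeeping. Each of the $n_1$ factors in the product, once its denominator is cleared, injects one extraneous root at $x=r_1+r_2-k_2$; these $n_1$ extraneous roots must be cancelled against the $n_1 p$ copies of that value produced by the first factor, leaving it with multiplicity $n_1 p - n_1 = n_1(p-1)$, which is the content of item~(iii). A sanity count $2n_1+n_1(n_2-p)+n_1(p-1)=n_1(n_2+1)$ agrees with the order of $\Gamma_1*\Gamma_2$.

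The main (and essentially only) obstacle is this pole-cancellation bookkeeping, i.e.\ making sure the removable zeros introduced by clearing denominators in the $n_1$ quadratic factors are matched correctly against the multiplicity $n_1 p$ of the pole-eigenvalue coming from $\bigl(f(L(\Gamma_2);x-r_1)\bigr)^{n_1}$; the quadratic algebra in item~(i) is otherwise mechanical and structurally identical to that of Proposition~\ref{Prop 3}.
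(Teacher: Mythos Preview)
Your proposal is correct and follows essentially the same route as the paper, which simply declares the proof ``similar to that of Proposition~\ref{Prop 3}'': specialize Theorem~\ref{Thm 3} via Lemma~\ref{Lemma 5}, clear the denominator to obtain the quadratic in~(i), and do the pole--multiplicity bookkeeping for~(ii) and~(iii). Your explicit cancellation count $n_1p-n_1=n_1(p-1)$ and the order check $2n_1+n_1(n_2-p)+n_1(p-1)=n_1(n_2+1)$ are in fact more careful than what the paper writes out.
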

\begin{proof}
   The proof is similar to that of Proposition \ref{Prop 3}
\end{proof}

\begin{prop} \label{Prop 6}
    Let $\Gamma_1=(G_1,\sigma_1,\mu_1)$ be any signed graph on $n_1$ nodes and $\Gamma_2=(K_{1,n},\sigma_2,\mu_2)$ be a signed star on $n+1$ nodes with $V(\Gamma_2)=\{v_1,v_2,\cdots,v_{n+1}\}$ where $d(v_1)=n$. Suppose that the spectrum of $L(\Gamma_1)$ is $\{\alpha_1,\alpha_2,\cdots,\alpha_{n_1}\}$ then the spectrum of $L(\Gamma_1*\Gamma_2)$ is given by
    \begin{enumerate}[label=(\roman*)]
        \item The eigenvalue $1+r_1$ with multiplicity $n_1(n-1).$
        \item $3n_1$ eigenvalues obtained from the roots of the equation $x^3-\{r_1(n+2)+(n+1)+\alpha_i\}x^2+\{r_1(2n+1)(r_1+1)+n^2r_1+(2r_1+n+1)\alpha_i-(n+1)(r_1-\alpha_i)^2\}x-nr_1^3-(n+1)nr_1^2-r_1^2\alpha_i-(n+1)r_1\alpha_i+\{r_1(n+1)+(n^2+1)-2n\mu_2(v_1)\}(r_1-\alpha_i)^2=0$ for each eigenvalue $\alpha_i(i=1,2,\cdots,n_1)$ of $L(\Gamma_1).$
    \end{enumerate}
    
\end{prop}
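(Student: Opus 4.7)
The plan is to apply Theorem~\ref{Thm 3} directly, with $\Gamma_2 = K_{1,n}$, and to use Lemma~\ref{lemma 6} for the $L$-coronal. The argument runs completely parallel to that of Proposition~\ref{Prop 4} (the signless-Laplacian version), since $(r_1-\alpha_i)^2 = (\alpha_i-r_1)^2$; only the coronal changes.

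First, I would record the spectrum of $L(\Gamma_2)$. Because $K_{1,n}$ is a tree, any two signings of it are switching equivalent, so $L(\Gamma_2)$ is similar, via a diagonal $\pm 1$ matrix, to the unsigned Laplacian of $K_{1,n}$. Hence $L(\Gamma_2)$ has spectrum $\bigl(0, 1^{(n-1)}, n+1\bigr)$, giving
\[
\bigl(f(L(\Gamma_2); x-r_1)\bigr)^{n_1} = (x-r_1)^{n_1}\,(x-r_1-1)^{n_1(n-1)}\,(x-r_1-(n+1))^{n_1}.
\]
From Lemma~\ref{lemma 6},
\[
\chi_{L(\Gamma_2)}(x-r_1) = \frac{(n+1)(x-r_1) - (n^2+1) + 2n\mu_2(v_1)}{(x-r_1)\bigl(x-r_1-(n+1)\bigr)},
\]
whose two poles $x=r_1$ and $x=r_1+(n+1)$ coincide exactly with the simple eigenvalues $0$ and $n+1$ of $L(\Gamma_2)$.

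Next, inside each of the $n_1$ factors in the product appearing in Theorem~\ref{Thm 3}, I would clear denominators by multiplying by $(x-r_1)\bigl(x-r_1-(n+1)\bigr)$. This turns the $i$-th factor into a cubic
\[
C_i(x) = (x-nr_1-\alpha_i)(x-r_1)\bigl(x-r_1-(n+1)\bigr) - \bigl[(n+1)(x-r_1) - (n^2+1) + 2n\mu_2(v_1)\bigr](r_1-\alpha_i)^2.
\]
The multipliers pulled out of the product contribute $(x-r_1)^{n_1}\bigl(x-r_1-(n+1)\bigr)^{n_1}$ in the denominator, which cancels the matching factors in $\bigl(f(L(\Gamma_2); x-r_1)\bigr)^{n_1}$. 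What survives is
\[
f(L(\Gamma_1*\Gamma_2); x) = (x-r_1-1)^{n_1(n-1)} \prod_{i=1}^{n_1} C_i(x),
\]
which yields item (i) at once, and reduces item (ii) to expanding each $C_i(x)$ and reading off its coefficients.

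The only real work is the algebraic expansion of $C_i(x)$: matching the coefficients of $x^3$, $x^2$, $x$, and the constant term against the cubic displayed in item (ii) is tedious but mechanical. This routine expansion is the sole obstacle; everything else is a direct substitution into Theorem~\ref{Thm 3} and Lemma~\ref{lemma 6}.
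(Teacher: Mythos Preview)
Your proposal is correct and follows exactly the approach the paper intends: the paper's own proof simply reads ``The proof is similar to that of Proposition~\ref{Prop 4},'' and what you have written is precisely that argument transported to the Laplacian setting via Theorem~\ref{Thm 3} and Lemma~\ref{lemma 6}. If anything, you are more explicit than the paper about why the pole factors cancel against the corresponding simple eigenvalues of $L(\Gamma_2)$, and your remark that any signing of the tree $K_{1,n}$ is switching equivalent to the all-positive one cleanly justifies the spectrum $(0,1^{(n-1)},n+1)$.
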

\begin{proof}
   The proof is similar to that of Proposition \ref{Prop 4}
\end{proof}

\noindent Using Theorem \ref{Thm 3} we can construct many pairs of $L-$co-spectral signed graphs, as stated in the following corollary of Theorem \ref{Thm1}.

    \begin{corollary}
       Let $\Gamma_1$ and $\Gamma_2$ be $L$-co-spectral  signed graphs, and $\Gamma$ an arbitrary signed graph. Then
       \begin{enumerate}[label=(\roman*)]
           \item $\Gamma_1*\Gamma$ and $\Gamma_2*\Gamma$ are $L$-co-spectral.
           \item $\Gamma*\Gamma_1$ and $\Gamma*\Gamma_2$ are $L$-co-spectral if $\chi_{L(\Gamma_1)}(x)=\chi_{L(\Gamma_2)}(x)$
       \end{enumerate}
    \end{corollary}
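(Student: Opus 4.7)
The plan is to reduce both assertions directly to the characteristic polynomial formula of Theorem~\ref{Thm 3}, which already packages $f(L(\Gamma_1*\Gamma_2);x)$ into ingredients extracted separately from each factor. The only ingredients that appear are (a) the order, regularity, and Laplacian spectrum of the left factor, and (b) the $L$-characteristic polynomial together with the $L$-coronal $\chi_{L(\cdot)}$ of the right factor. So the task is simply to verify that, under each hypothesis, every ingredient that actually enters the formula matches between the two signed graphs being compared.

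For part (i), $L$-cospectrality of $\Gamma_1$ and $\Gamma_2$ forces $n_1=n_2$, forces the multisets $\{\nu_i(\Gamma_1)\}$ and $\{\nu_i(\Gamma_2)\}$ to coincide, and (under the regularity assumption needed to invoke Theorem~\ref{Thm 3}) forces the common regularity $r_1$. I would then write down Theorem~\ref{Thm 3} for $\Gamma_j*\Gamma$, $j=1,2$:
\[
f\bigl(L(\Gamma_j*\Gamma);x\bigr)=\bigl(f(L(\Gamma);x-r_1)\bigr)^{n_1}\prod_{i=1}^{n_1}\!\Bigl(x-nr_1-\nu_i(\Gamma_j)-\chi_{L(\Gamma)}(x-r_1)\bigl(r_1-\nu_i(\Gamma_j)\bigr)^2\Bigr),
\]
where $n=|V(\Gamma)|$. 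Every parameter appearing on the right, including the coronal $\chi_{L(\Gamma)}$ of the right factor, is identical for $j=1,2$, so the two polynomials agree.

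For part (ii), I would apply Theorem~\ref{Thm 3} to $\Gamma*\Gamma_j$. With $r$ and $n$ the regularity and order of $\Gamma$, and $n_1$ the common order of $\Gamma_j$,
\[
f\bigl(L(\Gamma*\Gamma_j);x\bigr)=\bigl(f(L(\Gamma_j);x-r)\bigr)^{n}\prod_{i=1}^{n}\!\Bigl(x-n_1 r-\nu_i(\Gamma)-\chi_{L(\Gamma_j)}(x-r)\bigl(r-\nu_i(\Gamma)\bigr)^2\Bigr).
\]
The $L$-cospectrality of $\Gamma_1$ and $\Gamma_2$ makes the bracketed power factor agree, and the additional hypothesis $\chi_{L(\Gamma_1)}(x)=\chi_{L(\Gamma_2)}(x)$ (which is genuinely extra information, not implied by Laplacian cospectrality) makes the coronal term agree after the shift $x\mapsto x-r$, so the product factors also coincide.

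There is essentially no obstacle beyond bookkeeping; the substantive point to highlight is the asymmetry between (i) and (ii). In the left slot, only the $L$-spectrum of the factor enters Theorem~\ref{Thm 3}, so $L$-cospectrality alone suffices; in the right slot, the factor enters both through its $L$-spectrum (via $f(L(\cdot);x-r)$) and through the rational function $\chi_{L(\cdot)}$, and the latter is not a spectral invariant, which is precisely why the extra hypothesis $\chi_{L(\Gamma_1)}=\chi_{L(\Gamma_2)}$ must be imposed in (ii).
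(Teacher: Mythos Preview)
Your proposal is correct and follows exactly the intended route: the paper states this result as an immediate corollary of Theorem~\ref{Thm 3} without supplying any further argument, and your write-up is precisely the bookkeeping that Theorem~\ref{Thm 3} leaves implicit. Your observation about the needed regularity of the left factor (and the resulting equality of the regularities via $\operatorname{tr}L=nr$) is a useful clarification that the paper itself glosses over.
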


\begin{example}
    Let $\Gamma_1=(C_3,\sigma_1,\mu_1)$ and $\Gamma_2=(P_2,\sigma_2,\mu_2)$ be a signed cycle on $3$-nodes and (1,1)-co-regular signed graph on $2$-nodes respectively(see Figure \ref{f1}). The spectrum of $\Gamma_1$ and $\Gamma_2$ are $S(A(\Gamma_1))=\{-1^{(2)},2\}$ and $S(A(\Gamma_2))=\{-1,1\}$. The spectrum of neighbourhood corona product $\Gamma_1*\Gamma_2$(see Figure \ref{f1}) can be calculated using Proposition \ref{Prop1} as follows
    \begin{enumerate}[label=(\roman*)]
        \item For $-1\in S(A(\Gamma_1))$ with multiplicity 2, we have $\bigg(\frac{(-1)+1\pm\sqrt{(-1-1)^2+4\cdot2(-1)^2}}{2}\bigg)^2=\big(\sqrt{3}^{(2)},-\sqrt{3}^{(2)}\big)$ and for $2\in S(A(\Gamma_1))$, we have $\frac{2+1\pm\sqrt{(2-1)^2+4\cdot2(2)^2}}{2}=(-1.3723,4.3723)$.
        \item The eigenvalue $-1$ with multiplicity $3$.
    \end{enumerate}
    Thus the A-spectrum of $\Gamma_1*\Gamma_2$ is $S(A(\Gamma_1*\Gamma_2))=\{\pm\sqrt{3}^{(2)},-1.3723,4.3723,(-1)^{(3)}\}$. \vspace{.3cm}

    \noindent Next, we calculate the spectrum of $Q(\Gamma_1*\Gamma_2)$. The spectrum of $Q(\Gamma_1)$ and $Q(\Gamma_2)$ are $S(Q(\Gamma_1))=\{1^{(2)},4\}$ and $S(Q(\Gamma_2))=\{0,2\}$ respectively. The spectrum of $Q(\Gamma_1*\Gamma_2)$ can be calculated using Proposition \ref{Prop 3} as follows
    \begin{enumerate}[label=(\roman*)]
        \item For $4\in S(Q(\Gamma_1))$, we have $\frac{(2+1+1+4+4)\pm\sqrt{(2+1+1-4-4)^2+32}}{2}=(2.5359,9.4641)$ and for $1\in S(Q(\Gamma_2))$ with multiplicity 2 we have $\bigg(\frac{(2+1+1+4+1)\pm\sqrt{1+8}}{2}\bigg)^2=(3^{(2)},6^{(2)})$
        \item The eigenvalue $2$ with multiplicity $3$.
    \end{enumerate}
    \noindent Thus the Q-spectrum of $\Gamma_1*\Gamma_2$ is $S(Q(\Gamma_1*\Gamma_2))=\{2^{(3)},2.5359,3^{(2)},6^{(2)},9.4641\}$.\vspace{.3cm}
    
    \noindent Lastly, we calculate the spectrum of $L(\Gamma_1*\Gamma_2)$. The spectrum of $L(\Gamma_1)$ and $L(\Gamma_2)$ are $S(L(\Gamma_1))=\{0,3^{(2)}\}$ and $S(L(\Gamma_2))=\{0,2\}$ respectively. The spectrum of $L(\Gamma_1*\Gamma_2)$ can be calculated using Proposition \ref{Prop 5} as follows
    \begin{enumerate}[label=(\roman*)]
        \item For $0\in S(L(\Gamma_1))$, we have $\frac{(2+1-1+4)\pm\sqrt{(2+1-1-4)^2+32}}{2}=(0,6)$ and for $3\in\sigma(L(\Gamma_2))$ with multiplicity 2 we have $\bigg(\frac{(2+1-1+4+3)\pm\sqrt{25+8}}{2}\bigg)^2=\big(1.6277^{(2)},7.3723^{(2)}\big)$
        \item The eigenvalue $4$ with multiplicity $3$.
    \end{enumerate}
    \noindent Thus the L-spectrum of $\Gamma_1*\Gamma_2$ is $S(L(\Gamma_1*\Gamma_2))=\{0,1.6277^{(2)},4^{(3)},6,7.3723^{(2)}\}$. \vspace{.5cm}
\end{example}

    \section*{Acknowledgement}
    We would like to acknowledge National Institute of Technology Sikkim for giving doctoral fellowship to Bishal Sonar.

\bibliographystyle{abbrv}
\bibliography{main.bib}

\end{document}